\documentclass[12pt]{amsart}

\usepackage{tikz}

\usepackage{booktabs}
\usepackage{multicol}

\usepackage{amsthm}

\usepackage{dsfont}

\usepackage{lscape,amssymb, amsmath,verbatim,graphicx,color}
\usepackage{epsfig,subfigure,float}
\usepackage{graphicx}
\usepackage{epsfig}
\usepackage{siunitx}
\usepackage{multirow}
\usepackage{adjustbox}
\usepackage[margin=1in]{geometry}
\newtheorem{assumption}{Assumption}[section]

\newcommand{\mN}{\mathcal N}
\newcommand{\Tx}{\lfloor Tx \rfloor}
\newcommand{\intt}{\int\hspace{-.2cm}\int}

\newtheorem{cor}{Corollary}[section]

\textwidth15.8cm
\textheight23cm
\thispagestyle{empty}%
\parindent0cm%

\newtheorem{theorem}{Theorem}[section]
\newtheorem{lemma}{Lemma}[section]

\newtheorem{defi}{Definition}

\allowdisplaybreaks
\def\beq{\begin{equation}}
\def\eeq{\end{equation}}

\graphicspath{{plots/}}

\topmargin-1.5cm
\textwidth15.8cm
\textheight23.0cm
\oddsidemargin-0.3cm%
\evensidemargin.5cm
\thispagestyle{empty}%
\parindent0cm%
\numberwithin{table}{section}
\numberwithin{figure}{section}

\allowdisplaybreaks
\def\beq{\begin{equation}}
\def\eeq{\end{equation}}

\numberwithin{equation}{section}
\numberwithin{theorem}{section}

\graphicspath{{plots/}}

\usepackage{setspace}

\title[]{\bf Inference for the cross-covariance operator of stationary functional time series}

\author{Gregory Rice}
\address{ Department of Statistics and Actuarial Science, University of Waterloo, Waterloo, ON, Canada}
\author{Marco Shum}
\address{ Department of Statistics and Actuarial Science, University of Waterloo, Waterloo, ON, Canada}

\date{}

\begin{document}



\maketitle



\begin{abstract}

When considering two or more time series of functions or curves, for instance those derived from densely observed intraday stock price data of several companies, the empirical cross-covariance operator is of fundamental importance due to its role in functional lagged regression and exploratory data analysis. Despite its relevance, statistical procedures for measuring the significance of such estimators are undeveloped. We present methodology based on a functional central limit theorem for conducting statistical inference for the cross-covariance operator estimated between two stationary, weakly dependent, functional time series. Specifically, we consider testing the null hypothesis that two series possess a specified cross-covariance structure at a given lag. Since this test assumes that the series are jointly stationary, we also develop a change-point detection procedure to validate this assumption, which is of independent interest. The most imposing technical hurdle in implementing the proposed tests involves estimating the spectrum of a high dimensional spectral density operator at frequency zero. We propose a simple dimension reduction procedure based on functional PCA to achieve this, which is shown to perform well in a small simulation study. We illustrate the proposed methodology with an application to densely observed intraday price data of stocks listed on the NYSE.

\end{abstract}

\section{Introduction}


Functional time series analysis (FTSA) has grown substantially in the last decade and a half in order to provide methodology for functional data objects that are obtained sequentially over time. Perhaps the most typical way such data arises is when dense records of continuous time processes are segmented into collections of curves in some natural way. For example, high frequency records of pollution levels may be segmented to form daily pollution curves, or tick-by-tick asset price data may be used to construct daily intraday price or return curves; see \cite{aue:norinho:hormann:2014} and  \cite{kargin:onatski:2008}. Other examples include sequentially observed curves that describe physical phenomena, as arise in functional magnetic resonance imaging or in the observation of DNA minicircles; see \cite{aston:kirch:2012} and \cite{panaretos:tavakoli:2015}. We refer the reader to \cite{ramsay:silverman:2005} and \cite{ferraty:vieu:2006}  for overviews of the field of functional data analysis, and to \cite{bosq:2000} and \cite{hormann:kokoszka:2012} for an introduction to FTSA.

Most of the developments in these two fields focus on analyzing functional data obtained from a single source, e.g. intraday price curves derived from a single asset, or in comparing functional data from several independent populations. To give a few examples that are related to this work, \cite{panaretos:2010}, \cite{paparoditis:spatinas:2015}, and \cite{pigoli:2014} develop methods for performing inference for the covariance operator of functional data. Using a self normalization approach, a two sample test for the second order structure with functional time series data that allows for some dependence across the populations is developed in \cite{zhang:shao:2015}.

However, in many situations of interest,  functional data are obtained simultaneously from two or more sources, e.g. intraday price curves derived from several assets. In such cases one often wishes to quantify the potentially complex dependence relationships between such curves, and one way of achieving this is through the empirical cross-covariance operator. Although the notion of the cross-covariance operator between random elements in a Hilbert space was put forward over forty years ago in \cite{baker:1973}, statistical methodology for estimating and performing further inference for the cross-covariance structure between collections of curves seems quite new.

Measuring the cross-covariance between collections of curves has received some attention in the context of multivariate longitudinal and functional data. Under the assumption that the given data is a simple random sample of multivariate longitudinal data, \cite{dubin:muller:2005}, \cite{serban:staicu:carroll:2013}, and \cite{zhou:huang:carroll:2008} develop measures of cross-covariance between longitudinal data sources, including measures based on canonical correlation analysis and principal component analysis. Principal component analysis of multivariate functional data is also studied in \cite{chiou:chen:yang:2014} and \cite{chiou:muller:2013}, and in \cite{petersen:muller:2016} an analog of the covariance matrix for multivariate functional data using Fr\'echet integration is defined. In each of these cases, the potential effect of temporal dependence among the functional units is not considered.

In the context of bivariate functional time series, the cross-covariance operator and its lagged versions are arguably of greater importance. Methods for lagged functional time series regression, which have recently been put forward in \cite{hormann:kidzinski:kokoszka:2015} and \cite{pham:panaretos:2017}, are naturally based on the Fourier transform of the lagged cross-covariance operators. Moreover, the initial exploratory analysis of any such series would typically begin by considering the sequence of lagged cross-covariance operators to try to gain insight into the relationship between the series.

Despite the apparent utility of the cross-covariance operator of functional time series, statistical inference for it has not yet been considered, to the best of our knowledge. In Chapter 4 of the seminal work of Bosq \cite{bosq:2000}, a central limit theorem is given for the covariance and auto-covariance operators of functional time series that may be represented as linear processes. A portmanteau-type test for independence of two functional time series is developed in \cite{h:r:2014ti} based on the norms of cross-covariance operators at long lags, but their test assumes under the null hypothesis that the individual series are independent, and is hence not suitable for quantifying the significance of estimates of the cross-covariance between curves.

Additionally, when the data are obtained as bivariate functional time series, it is of special interest to know if the cross-covariance structure changes during the observation period. Being able to test for such a feature 1) helps validate the assumption of joint stationarity needed to apply inferential procedures for the cross-covariance operator, 2) is of use for determining if the regression function changes in the functional lagged regression problem, and 3) may be of independent interest since the presence and location of such a change point may signify an important event. This problem has also not been addressed, although several authors have considered analogous problems in the context of finite dimensional time series; we refer to \cite{dette:wu:zhou:2015}, \cite{weid1:2012}, \cite{wied2:2012}, and \cite{aue:hormann:horvath:reimherr:2009}. Change point analysis for the mean of functional time series has been considered recently in \cite{wendler1:2016} and \cite{wendler2:2016}.

In this paper, we consider two hypothesis testing problems: 1) tests for a specified cross-covariance structure between two functional time series, e.g. that the two series are uncorrelated at a given lag, and 2) change point tests for the covariance structure within a given sample. Two varieties of test statistics are proposed in each of these settings that are based on either the standard $L^2$ distance or dimension reduction based methods using a suitable principal component basis. These two classes of statistics possess complimentary advantages, which we detail by means of a theoretical result. The asymptotic properties of each test statistic are established assuming a general weak dependence condition similar to the one introduced in \cite{hormann:kokoszka:2010}, which includes nonlinear time series, as well as the majority of functional time series models studied to date, under mild regularity conditions.

This methodology is primarily motivated by a basic observation of Brillinger \cite{brillinger:1975} that inference for the covariance and/or cross-covariance of time series can be made by performing inference for the mean of a suitably constructed series. This idea was utilized in \cite{himdi:roy:duchesne:2003} in order to conduct non-parametric inference for the cross-covariance matrix of finite dimensional time series.

The rest of the paper is organized as follows. Section \ref{main} contains the main assumptions and notation of the paper, as well as asymptotic results for cross-covariance function estimators under these assumptions. The results developed in Section \ref{main} are utilized to develop hypothesis tests in Section \ref{i-1}. Some details about the practical implementation of the methods developed in Section \ref{i-1}  are provided in Section \ref{imp}, which include methods for overcoming the technical challenge of estimating the eigen-elements of a high-dimensional spectral density operator at frequency zero that arise in the limiting distribution of the test statistics. These testing and estimation procedures were studied by means of Monte Carlo simulation, the results of which we present in Section \ref{simul}. We illustrate our methodology with an application to cumulative intraday return curves derived from the 1-minute resolution price of Microsoft and Exxon Mobile stock listed on the New York stock exchange from the year 2001 in Section \ref{app}. All technical derivations and proofs are provided in the appendices following these sections.

\section{Asymptotic properties of cross-covariance function estimates}\label{main}

Before we proceed, we introduce a bit of notation. Let $\langle \cdot, \cdot \rangle_d$ denote the standard inner product on the space $L^2[0,1]^d$ of real valued square integrable functions defined on $[0,1]^d$, and let $\| \cdot \|_d =\langle \cdot, \cdot \rangle_d^{1/2}$. We write $f$ for the function $f(t)$ when it does not cause confusion. We use the notation $\int$ to denote $\int_0^1$.

In this section, we suppose that $ \{ (X_i(t),Y_i(s)),\; t,s \in[0,1] \}_{i\in {\mathbb Z}}$ is a jointly stationary sequence of real valued stochastic processes whose sample paths are in $L^2[0,1]$ from which we have observed a sample of length $T$, $\{(X_1(t),Y_1(s)),...,(X_T(t),Y_T(s))\}$. For instance, $(X_i(t),Y_i(s))$ could be used to denote the price of stock $X$ and stock $Y$ on day $i$ at intraday times $t$ and $s$ normalized to the unit interval. We let $\mu_X(t)=EX_0(t)$, and $\mu_Y(s)= EY_0(s)$, and define

$$
C_{XY}(t,s) = \mbox{cov}(X_0(t),Y_0(s)) = E[(X_0(t)-\mu_X(t))(Y_0(s)-\mu_Y(s))],
$$

to be the cross-covariance function (or kernel) between $\{X_i\}$ and $\{Y_i\}$ at lag zero. $C_{XY}$ defines the cross-covariance operator $c_{XY}:L^2[0,1]\to L^2[0,1]$ via

$$
c_{XY}(f)(t) = \int C_{XY}(t,s)f(s)ds.
$$

This relationship implies that we may conduct inference for $c_{XY}$ by conducting inference for the function $C_{XY}$. Based on the sample, we may estimate $C_{XY}$ with

$$
\hat{C}_{XY}(t,s,x) = \frac{1}{T} \sum_{i=1}^{\lfloor Tx \rfloor} (X_i(t) - \bar{X}(t))(Y_i(s)- \bar{Y}(s)),
$$
which denotes the partial sample estimate of $xC_{XY}$ based on the first $x$ proportion of the sample, where

$$\bar{X}(t) = \frac{1}{T} \sum_{i=1}^T X_i(t), \mbox{ and } \bar{Y}(t) = \frac{1}{T} \sum_{i=1}^T Y_i(t).$$

Under mild regularity conditions on the process $\{(X_i(t),Y_i(s))\}$, which are implied by the Assumption 2.1 below, $\hat{C}_{XY}(t,s,1)$ is a consistent estimator of $C_{XY}(t,s)$ in $L^2[0,1]^2$. The motivation for considering now the partial sample estimates of $C_{XY}$ is due to our application to change point testing for the cross-covariance developed below. We note that when interested in studying the cross-covariance of the series at some fixed lag that is different from zero, say $\ell>0 $, one can apply all of the below methods to the sample $\{(X_{\ell+1}(t),Y_1(s)),...,(X_T(t),Y_{T-\ell}(s))\}$ of length $T-\ell$ with only superficial changes to the proofs and notation, and so we otherwise make no special mention of this case.

In order to derive the asymptotic properties of $\hat{C}_{XY}$, we make use of the following assumption that imposes stationarity, weak dependence, and moment conditions on the functional time series.

\begin{assumption}\label{edep}

(a) There exists a measurable function $g_{XY}\colon S^\infty\to L^2[0,1]\times L^2[0,1]$, where $S$ is a measurable space, and a sequence of independent and identically distributed (iid) innovations $\{\epsilon_i,\colon i\in\mathbb{Z}\}$ taking values in $S$ such that $(X_i,Y_i)=g_{XY}(\epsilon_i,\epsilon_{i-1},\ldots)$.

(b) For all $m\ge 1$, the $m$-dependent sequence $(X_{i,m},Y_{i,m})=g(\epsilon_i,\ldots,\epsilon_{i-m+1},\epsilon^*_{i-m,m},\epsilon^*_{i-m-1,m},\ldots)$ with $\epsilon^*_{i,m}$ being independent copies of $\epsilon_{i,0}$, and $\{\varepsilon_{i,m}^*\colon i\in\mathbb{Z}\}$ independent of $\{\epsilon_i\colon i\in\mathbb{Z}\}$, satisfies for some $p > 4$,

$$
\big(E\|X_i-X_{i,m}\|^p\big)^{1/p}=O(m^{-\alpha}),\mbox{ and } \big(E\|Y_i-Y_{i,m}\|^p\big)^{1/p}=O(m^{-\beta})
$$

where $\alpha,\beta >1$.

\end{assumption}

Assumption \ref{edep}(a) implies that $\{(X_i,Y_i)\}$ is a jointly stationary sequence of Bernoulli shifts in $L^2[0,1]\times L^2[0,1]$ that is driven by an underlying iid innovation sequence. The space of functional time series models contained within this class is quite large, including the functional ARMA and GARCH processes; see \cite{bosq:2000} and \cite{aue:horvath:pellatt:2016}. Condition (b) defines a type of $L^p$-$m$-approximability condition along the lines of \cite{hormann:kokoszka:2010}, which can often be easily verified when a time series model for the observations is given. The rate condition on the decay of these coefficients, which is somewhat stronger than the main condition studied in \cite{hormann:kokoszka:2010}, is used in order to show that certain smoothed periodogram type spectral density operator estimates defined below are consistent. These assumptions could be replaced by mixing conditions and functional versions of cummulant summability conditions as presented in \cite{panaretos:tavakoli:2012} and \cite{zhang:2016}, which are more comparable to the assumptions in \cite{himdi:roy:duchesne:2003}.

\begin{theorem}\label{th-1} Under Assumption \ref{edep}, there exists a sequence of Gaussian processes, $\{\Gamma_T(t,s,x),\; t,s,x\in[0,1]\}_{T\in {\mathbb N}}$, defined on the same probability space as $\{(X_i,Y_i)\},$ that satisfy

\begin{align}\label{th-1-eq}
\sup_{0\le x \le 1} \intt \left( \sqrt{T}\left(\hat{C}_{XY}(t,s,x)-\frac{\lfloor Tx \rfloor }{T} C_{XY}(t,s)\right) -\Gamma_T(t,s,x)\right)^2dtds = o_P(1),
\end{align}
where $E\Gamma_T(t,s,x)=0$, and

$$
\mbox{cov}(\Gamma_T(t,s,x),\Gamma_T(u,v,y))=\min(x,y) D(t,s,u,v),
$$
where $D(t,s,u,v)$ is the long run covariance function of the sequence $\{(X_i(t)-\mu_X(t))(Y_i(s)-\mu_Y(s))\}$, namely

$$
D(t,s,u,v) = \sum_{\ell=-\infty}^\infty \mbox{cov}((X_0(t)-\mu_X(t))(Y_0(s)-\mu_Y(s)),(X_\ell(u)-\mu_X(u))(Y_\ell(v)-\mu_Y(v)).
$$

\end{theorem}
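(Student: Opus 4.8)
The plan is to pass to centered variables, recognize the leading term as a normalized partial sum of a weakly dependent $L^2[0,1]^2$-valued sequence, and then apply a functional invariance principle for such sequences. First I would expand, with $Z_i(t,s):=(X_i(t)-\mu_X(t))(Y_i(s)-\mu_Y(s))-C_{XY}(t,s)$,
$$(X_i(t)-\bar X(t))(Y_i(s)-\bar Y(s))=Z_i(t,s)+C_{XY}(t,s)-(\bar Y(s)-\mu_Y(s))(X_i(t)-\mu_X(t))-(\bar X(t)-\mu_X(t))(Y_i(s)-\mu_Y(s))+(\bar X(t)-\mu_X(t))(\bar Y(s)-\mu_Y(s)).$$
Summing over $i\le\lfloor Tx\rfloor$, dividing by $T$, multiplying by $\sqrt T$, and subtracting $\tfrac{\lfloor Tx\rfloor}{T}C_{XY}$ gives $\sqrt{T}\big(\hat C_{XY}(t,s,x)-\tfrac{\lfloor Tx\rfloor}{T}C_{XY}(t,s)\big)=\tfrac{1}{\sqrt T}\sum_{i=1}^{\lfloor Tx\rfloor}Z_i(t,s)+R_T(t,s,x)$, where $R_T$ collects the three mean-correction terms. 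Each such term is, uniformly in $x$ and in $L^2[0,1]^2$ norm, equal to $T^{-1/2}$ times a product of two factors of the form $T^{-1/2}\sum_{i\le\lfloor Tx\rfloor}(X_i-\mu_X)$, $T^{-1/2}\sum_{i\le T}(X_i-\mu_X)$, or the analogues for $Y$; since $\{X_i-\mu_X\}$ and $\{Y_i-\mu_Y\}$ are themselves Bernoulli shifts satisfying Assumption \ref{edep} and hence the functional CLT (so their normalized partial-sum processes are tight in $D([0,1],L^2[0,1])$), these factors are $O_P(1)$ and $\sup_{0\le x\le1}\|R_T(\cdot,\cdot,x)\|_2=O_P(T^{-1/2})=o_P(1)$. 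It therefore suffices to prove \eqref{th-1-eq} with $\tfrac{1}{\sqrt T}\sum_{i\le\lfloor Tx\rfloor}Z_i$ in place of the bracketed quantity.

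Next I would verify that $\{Z_i\}$ is a stationary, mean-zero, $L^{p/2}$-$m$-approximable sequence in $L^2[0,1]^2$. The Bernoulli-shift representation and stationarity are immediate, since $Z_i$ is a fixed measurable function of $(\epsilon_i,\epsilon_{i-1},\ldots)$. With $Z_{i,m}(t,s):=(X_{i,m}(t)-\mu_X(t))(Y_{i,m}(s)-\mu_Y(s))-C_{XY}(t,s)$ one has $Z_i-Z_{i,m}=(X_i-X_{i,m})\otimes(Y_i-\mu_Y)+(X_{i,m}-\mu_X)\otimes(Y_i-Y_{i,m})$, so using $\|f\otimes g\|_2=\|f\|\,\|g\|$, the triangle inequality in $L^{p/2}$, and the Cauchy--Schwarz inequality for expectations,
$$\big(E\|Z_i-Z_{i,m}\|_2^{p/2}\big)^{2/p}\le\big(E\|X_i-X_{i,m}\|^{p}\big)^{1/p}\big(E\|Y_0-\mu_Y\|^{p}\big)^{1/p}+\big(E\|X_0-\mu_X\|^{p}\big)^{1/p}\big(E\|Y_i-Y_{i,m}\|^{p}\big)^{1/p}.$$
By Assumption \ref{edep}(b) and the finiteness of $E\|X_0\|^p,E\|Y_0\|^p$ this is $O(m^{-\alpha}+m^{-\beta})=O(m^{-\min(\alpha,\beta)})$; since $\min(\alpha,\beta)>1$ these coefficients are summable, and $p/2>2$ because $p>4$. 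The same bounds show that $D(t,s,u,v)=\sum_{\ell}\mathrm{cov}(Z_0(t,s),Z_\ell(u,v))$ converges absolutely in $L^2[0,1]^4$, so the long-run covariance kernel is well defined and coincides with the kernel displayed in the statement.

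Finally I would invoke the weak invariance principle for $L^q$-$m$-approximable sequences taking values in a separable Hilbert space (in the spirit of the work of Berkes, Horv\'ath, and Rice): with $q=p/2>2$ and summable approximation coefficients, $\tfrac{1}{\sqrt T}\sum_{i\le\lfloor Tx\rfloor}Z_i$ converges in distribution in $D([0,1],L^2[0,1]^2)$ to a Gaussian process $W$ with $W(0)=0$, $EW(x)=0$, and $\mathrm{cov}(W(x),W(y))=\min(x,y)D$. Since $W$ has a.s. continuous sample paths the convergence in fact holds in the sup-norm on $C([0,1],L^2[0,1]^2)$, and the discretization $\sup_x|\lfloor Tx\rfloor/T-x|\le 1/T\to 0$ is harmless. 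A standard coupling argument (convergence in distribution to a fixed, continuous-path limit law produces, on the original probability space, processes $\Gamma_T\overset{d}{=}W$ with $\sup_x\big\|\tfrac{1}{\sqrt T}\sum_{i\le\lfloor Tx\rfloor}Z_i-\Gamma_T(\cdot,\cdot,x)\big\|_2\to 0$ in probability) then yields \eqref{th-1-eq} together with the asserted mean and covariance of $\Gamma_T$, after absorbing the remainder $R_T$ from the first paragraph.

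I expect the main obstacle to be the Hilbert-space invariance principle itself — approximating the normalized partial sums of an infinite-dimensional, weakly dependent sequence uniformly by Gaussian processes with the correct long-run covariance, which internally requires truncation to finitely many principal directions, a Gaussian (Hungarian-type) approximation for the blocked sums, and control of the discarded tail. If that result is taken from the literature, the genuine work reduces to the verification in the third paragraph that a product of two $L^p$-$m$-approximable sequences is $L^{p/2}$-$m$-approximable, which is exactly what forces $p>4$, and to the bookkeeping that makes the estimator's own centering $\tfrac{\lfloor Tx\rfloor}{T}C_{XY}$ cancel against $\sum_{i\le\lfloor Tx\rfloor}C_{XY}$ so that the mean-correction terms are genuinely of lower order.
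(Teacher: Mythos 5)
Your proof is correct and follows essentially the same route as the paper: reduce to the centered partial-sum process of $Z_i=(X_i-\mu_X)\otimes(Y_i-\mu_Y)-C_{XY}$ by showing the mean-correction terms are uniformly $O_P(T^{-1/2})$ (the paper's Lemma A.1, proved via the tightness of the individual partial-sum processes), and verify that the product sequence is $L^{p/2}$-$m$-approximable with summable coefficients via the identical tensor decomposition and Cauchy--Schwarz bound (the paper's Lemma A.2). The only cosmetic difference is at the final step, where the paper invokes Theorem 1 of Jirak (2013) to deliver the coupled Gaussian processes $\Gamma_T$ in one stroke, whereas you assemble the same conclusion from a weak invariance principle plus a Skorokhod--Dudley--Wichura coupling.
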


Theorem \ref{th-1} provides a Skorokhod-Dudley-Wichura type characterization of an invariance principle for $\hat{C}_{XY}$ that can be utilized to establish the asymptotic properties of continuous functionals of $\hat{C}_{XY}$, and we consider several such statistics in Section 3 below in order to carry out hypothesis testing for $C_{XY}$. The proof of Theorem \ref{th-1} is given in Appendix \ref{proofs}. The function $D$ describes the asymptotic covariance function of $\sqrt{T}\hat{C}_{XY}(t,s,1)$. $D$ naturally defines a Hilbert-Schmidt integral operator, $d:L^2[0,1]^2 \to L^2[0,1]^2$, given by

$$
d(f)(t,s) = \intt D(t,s,u,v) f(u,v)dudv,
$$
which further defines an orthonormal basis of eigenfunctions $\varphi_i$ in $L^2[0,1]^2$, and a nonnegative sequence of eigenvalues $\lambda_1 \ge \lambda_2 \ge...\ge 0$ satisfying

\begin{align}\label{eigeneq}
d(\varphi_i)(t,s) = \lambda_i \varphi_i(t,s).
\end{align}
 We define these quantities here as they appear in the limiting distributions and definitions for the test statistics considered below.

\section{Inference for the cross-covariance function}\label{i-1}

 Theorem \ref{th-1} points to some asymptotically validated methods to measure the significance of estimates of $C_{XY}$. For instance, we may wish to test based on the estimate $\hat{C}_{XY}(\cdot,\cdot,1)$

\begin{align*}
H_{0,1}: \; C_{XY}=C_0 \; \mbox{ versus } H_{A,1}: \;\; C_{XY}\ne C_0,
\end{align*}
where equality is understood in the $L^2[0,1]^2$ sense, and $C_0$ is a given function of interest. This null function might be determined from historical data, or taken to be zero in order to test for zero cross-covariance between $X_i$ and $Y_i$ at a given lag. Since the hypothesis $H_{0,1}$ is well posed only when the sequence $\{(X_i,Y_i)\}$ is, at least weakly, jointly stationary, it is also of interest to determine whether or not this assumption is valid. We frame this as a second hypothesis test of the time homogeneity of the cross-covariance against the ``at most one" change point in the cross-covariance alternative:

$$
H_{0,2}:\;\; C_{XY}^{(1)}= C_{XY}^{(2)}= \cdots = C_{XY}^{(T)}, \mbox{ versus }
$$
$$
H_{A,2}: \;\; C_1= C_{XY}^{(1)}=  \cdots = C_{XY}^{(k^*)} \ne C_{XY}^{(k^*+1)} = \cdots =C_{XY}^{(T)}=C_2, \mbox{ for some } k^*=\lfloor T \theta \rfloor,\;\; \theta \in(0,1),
$$

where $C_{XY}^{(i)}(t,s)=\mbox{cov}(X_i(t),Y_i(s))$. We proceed by developing test statistics for each of these hypotheses. In order to test $H_{0,1}$, we first consider a statistic based on the normalized $L^2$ distance of $\hat{C}_{XY}$ to $C_0$:

$$
F_T = T \| \hat{C}_{XY}(\cdot,\cdot,1)-C_0 \|_2^2.
$$

\begin{cor}\label{cor-1} Under Assumption \ref{edep} and $H_{0,1}$,

\begin{align}\label{cor1eq}
F_T \stackrel{D}{\longrightarrow} \sum_{i=1}^\infty \lambda_i \mN_i^2,\mbox{ as } T \to \infty,
\end{align}
where $\{\lambda_i, \; i\ge1 \}$ are defined in \eqref{eigeneq}, and $\{\mN_i$, $i\ge 1\}$ are independent standard normal random variables.
\end{cor}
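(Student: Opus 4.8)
The plan is to derive the corollary directly from Theorem~\ref{th-1}: specialize the invariance principle to $x=1$, reduce $F_T$ to the squared $L^2$-norm of the Gaussian approximant $\Gamma_T(\cdot,\cdot,1)$, and then identify that norm via a Karhunen--Lo\`eve expansion of the long-run covariance operator $d$.

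First I would specialize \eqref{th-1-eq} to $x=1$. Since $\lfloor T \rfloor / T = 1$ and, under $H_{0,1}$, $C_{XY}=C_0$, discarding the supremum over $x$ in \eqref{th-1-eq} gives
$$
\big\| \sqrt{T}\big(\hat C_{XY}(\cdot,\cdot,1) - C_0\big) - \Gamma_T(\cdot,\cdot,1)\big\|_2 = o_P(1).
$$
Writing $A_T = \sqrt{T}(\hat C_{XY}(\cdot,\cdot,1) - C_0)$ and $B_T = \Gamma_T(\cdot,\cdot,1)$, so that $F_T = \|A_T\|_2^2$, I would pass this $o_P(1)$ through the squaring map using the elementary bound $\big| \|A_T\|_2^2 - \|B_T\|_2^2 \big| \le \|A_T - B_T\|_2 \big(\|A_T\|_2 + \|B_T\|_2\big)$. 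By Theorem~\ref{th-1} the law of $B_T$ is that of a mean-zero Gaussian element of $L^2[0,1]^2$ with covariance function $\min(1,1)D = D$, hence with covariance operator $d$, and in particular does not depend on $T$; thus $\|B_T\|_2 = O_P(1)$, whence $\|A_T\|_2 \le \|B_T\|_2 + o_P(1) = O_P(1)$, and the displayed bound is $o_P(1)$. Therefore $F_T = \|B_T\|_2^2 + o_P(1)$.

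It then remains to identify the distribution of $\|B_T\|_2^2$. The operator $d$ is self-adjoint, nonnegative, and trace class — trace class because $D$ is the long-run covariance of the weakly dependent $L^2[0,1]^2$-valued sequence $\{(X_i(t)-\mu_X(t))(Y_i(s)-\mu_Y(s))\}$, whose summands are square-integrable as $L^2[0,1]^2$-valued random variables under Assumption~\ref{edep} with $p>4$ — so the Karhunen--Lo\`eve theorem represents $B_T$ in law as $\sum_{i=1}^\infty \sqrt{\lambda_i}\,\mN_i\,\varphi_i$, with $\{\mN_i\}_{i\ge1}$ iid standard normal and $\{\lambda_i\},\{\varphi_i\}$ as in \eqref{eigeneq}, the series converging in $L^2[0,1]^2$ almost surely. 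Parseval's identity then gives $\|B_T\|_2^2 = \sum_{i=1}^\infty \lambda_i \mN_i^2$, which is finite a.s.\ and in $L^1$ since $\sum_i \lambda_i = \operatorname{tr}(d) < \infty$, and this distribution is independent of $T$. Combining with $F_T = \|B_T\|_2^2 + o_P(1)$ yields $F_T \stackrel{D}{\longrightarrow} \sum_{i=1}^\infty \lambda_i \mN_i^2$, which is \eqref{cor1eq}.

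Essentially all of the real work is contained in Theorem~\ref{th-1}, which may be assumed here; given it, the corollary is a short deduction. The only points needing a little care are the tightness of $\|B_T\|_2$ used to push the $o_P(1)$ error through the squaring, and confirming that $d$ is trace class so that the limiting series is well defined and a.s.\ finite — both immediate from the moment and weak-dependence hypotheses. I therefore anticipate no genuine obstacle beyond bookkeeping.
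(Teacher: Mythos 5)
Your proposal is correct and follows essentially the same route as the paper: apply Theorem \ref{th-1} at $x=1$, pass the $o_P(1)$ approximation through the squared norm, and identify the law of $\|\Gamma_T(\cdot,\cdot,1)\|_2^2$ via the Karhunen--Lo\`eve/Mercer representation $\Gamma_0(\cdot,\cdot,1)=\sum_\ell \lambda_\ell^{1/2}W_\ell(1)\varphi_\ell$ together with Parseval's identity. The paper states this more tersely (noting only that $\Gamma_T\stackrel{D}{=}\Gamma_0$ and that the corollaries follow); your write-up supplies the same argument with the bookkeeping made explicit.
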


Corollary \ref{cor-1} shows that a test of asymptotic size $\alpha$ of $H_{0,1}$ may be obtained by comparing $F_T$ to the $1-\alpha$ quantile of the limit distribution given in \eqref{cor1eq}, which depends on the unknown eigenvalues of the operator $d$. In order to estimate these quantiles, one can estimate a suitably large number of eigenvalues $\lambda_i$ using an estimate of $d$, and then continue by using these estimates to approximate the limiting distribution via Monte Carlo simulation. The details of this implementation are discussed in Section \ref{imp} below, including how to obtain consistent estimates of a finite number of the $\lambda_i's$.

The fact that the limiting distribution of $F_T$ is nonpivotal though encourages one to consider alternate test statistics based on projecting $\hat{C}_{XY}$ into finite dimensional subspaces of $L^2[0,1]^2$. A natural choice of the finite dimensional space to choose is the one spanned by the eigenbasis generated by $d$. In fact, it is a fairly straightforward calculation to show that for any positive integer $p$, under Assumption \ref{edep}, the inner products

$$
\langle \sqrt{T}(\hat{C}_{XY}(\cdot,\cdot,1)-C_{XY}), \varphi_i \rangle_2, \; 1 \le i \le p
$$
are asymptotically independent, and hence projecting  into the directions of $\varphi_i$ has the effect of partitioning the centered estimator $\hat{C}_{XY}$ into approximately mean zero and independent components. This is similar to the motivation provided for dynamical principal component analysis of Brillinger \cite{brillinger:1975}, which has been studied in the context of functional time series data in \cite{hormann:kidzinski:hallin:2015}, and \cite{panaretos:tavakoli:2012}.

In this direction, let

$$
F_{T,p}=\sum_{i=1}^p \frac{\langle \sqrt{T}(\hat{C}_{XY}(\cdot,\cdot,1)-C_0), \hat{\varphi}_i \rangle^2_2}{\hat{\lambda}_i},
$$

where $\hat{\varphi}_i$, and $\hat{\lambda}_i$, $1\le i \le p$ are consistent estimates of $\varphi_i$ and $\lambda_i$, $1 \le i \le p$, i.e. they satisfy

\begin{align}\label{spec-est}
\max_{1 \le i \le p} \| \hat{\varphi}_i - \hat{c}_i\varphi_i \|_2= o_P(1), \mbox{ and }\max_{1 \le i \le p}|\hat{\lambda}_i - \lambda_i|=o_P(1),
\end{align}
where $\hat{c}_i = sign(\langle \varphi_i, \hat{\varphi}_i\rangle )$. We discuss in Section $\ref{imp}$ how to obtain consistent estimates of $\lambda_i$ and $\varphi_i(t,s)$.

\begin{cor}\label{cor-2} Under Assumption \ref{edep} and $H_{0,1}$,

\begin{align*}
F_{T,p} \stackrel{D}{\longrightarrow} \chi^2(p),\mbox{ as } T \to \infty,
\end{align*}
where $\chi^2(p)$ denotes a chi-squared random variable with $p$ degrees of freedom.
\end{cor}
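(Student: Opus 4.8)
The plan is to derive Corollary \ref{cor-2} from the invariance principle of Theorem \ref{th-1} by projecting the centered estimator onto the eigenbasis of $d$ and then replacing the population eigen-elements $\varphi_i,\lambda_i$ by their estimates at the cost of an $o_P(1)$ error. Throughout we work under $H_{0,1}$, so that $C_{XY}=C_0$ and $\lfloor T\rfloor/T=1$, and we assume, as is implicit in the definition of $F_{T,p}$, that $\lambda_p>0$, so that dividing by $\hat\lambda_i$ is eventually legitimate.

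First I would specialize \eqref{th-1-eq} to $x=1$, which is included in the supremum over $x\in[0,1]$, to get
$$
\big\|\sqrt{T}\big(\hat{C}_{XY}(\cdot,\cdot,1)-C_0\big)-\Gamma_T(\cdot,\cdot,1)\big\|_2=o_P(1),
$$
where, since $\min(1,1)=1$, $\Gamma_T(\cdot,\cdot,1)$ is a mean-zero Gaussian element of $L^2[0,1]^2$ with covariance kernel $D$, equivalently covariance operator $d$. Consequently, for every $T$ the random vector $\big(\langle\Gamma_T(\cdot,\cdot,1),\varphi_i\rangle_2\big)_{1\le i\le p}$ has the (fixed) multivariate normal law with mean zero and diagonal covariance $(\lambda_1,\dots,\lambda_p)$, using self-adjointness of $d$, \eqref{eigeneq}, and orthonormality of the $\varphi_i$ to compute $\mbox{cov}(\langle\Gamma_T,\varphi_i\rangle_2,\langle\Gamma_T,\varphi_j\rangle_2)=\langle d\varphi_i,\varphi_j\rangle_2=\lambda_i\delta_{ij}$. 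By Cauchy--Schwarz and $\|\varphi_i\|_2=1$, the display shows that each coordinate of $\big(\langle\sqrt{T}(\hat{C}_{XY}(\cdot,\cdot,1)-C_0),\varphi_i\rangle_2\big)_{1\le i\le p}$ differs from the corresponding coordinate of the $\Gamma_T$-vector by $o_P(1)$; hence this vector converges in distribution to $(\sqrt{\lambda_1}\mN_1,\dots,\sqrt{\lambda_p}\mN_p)$ with the $\mN_i$ independent standard normals, and in particular it, and therefore $\|\sqrt{T}(\hat{C}_{XY}(\cdot,\cdot,1)-C_0)\|_2$, is $O_P(1)$.

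Next I would swap in the estimates. Writing $\hat\varphi_i=\hat c_i\varphi_i+(\hat\varphi_i-\hat c_i\varphi_i)$ with $\hat c_i\in\{-1,1\}$,
$$
\langle\sqrt{T}(\hat{C}_{XY}(\cdot,\cdot,1)-C_0),\hat\varphi_i\rangle_2=\hat c_i\langle\sqrt{T}(\hat{C}_{XY}(\cdot,\cdot,1)-C_0),\varphi_i\rangle_2+R_{T,i},
$$
where $|R_{T,i}|\le\|\sqrt{T}(\hat{C}_{XY}(\cdot,\cdot,1)-C_0)\|_2\,\|\hat\varphi_i-\hat c_i\varphi_i\|_2=O_P(1)\cdot o_P(1)=o_P(1)$ by \eqref{spec-est} and the tightness just established. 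Squaring, using $\hat c_i^2=1$ and the $O_P(1)$ bound on the projections, gives $\langle\sqrt{T}(\hat{C}_{XY}-C_0),\hat\varphi_i\rangle_2^2=\langle\sqrt{T}(\hat{C}_{XY}-C_0),\varphi_i\rangle_2^2+o_P(1)$ for each $1\le i\le p$; together with $\hat\lambda_i=\lambda_i+o_P(1)$, $\lambda_i>0$, and Slutsky's theorem this yields
$$
F_{T,p}=\sum_{i=1}^p\frac{\langle\sqrt{T}(\hat{C}_{XY}(\cdot,\cdot,1)-C_0),\varphi_i\rangle_2^2}{\lambda_i}+o_P(1).
$$
Finally, the continuous mapping theorem applied to the joint convergence of the second paragraph gives
$$
\sum_{i=1}^p\frac{\langle\sqrt{T}(\hat{C}_{XY}(\cdot,\cdot,1)-C_0),\varphi_i\rangle_2^2}{\lambda_i}\ \stackrel{D}{\longrightarrow}\ \sum_{i=1}^p\frac{(\sqrt{\lambda_i}\mN_i)^2}{\lambda_i}=\sum_{i=1}^p\mN_i^2\sim\chi^2(p),
$$
and the corollary follows. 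The Gaussian, diagonal-covariance structure of the projections is essentially immediate from Theorem \ref{th-1}; the main obstacle is the bookkeeping in the third paragraph, i.e.\ transferring the distributional limit through the estimated $\hat\varphi_i$ and $\hat\lambda_i$, which rests on first extracting the $O_P(1)$ tightness of $\|\sqrt{T}(\hat{C}_{XY}(\cdot,\cdot,1)-C_0)\|_2$ from the invariance principle and on the uniform consistency rates in \eqref{spec-est}.
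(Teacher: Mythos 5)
Your proof is correct and follows essentially the same route as the paper, which derives all of the corollaries from Theorem \ref{th-1} via the representation $\Gamma_T \stackrel{D}{=} \sum_{\ell}\lambda_\ell^{1/2}W_\ell(x)\varphi_\ell$; projecting that representation at $x=1$ onto $\varphi_1,\dots,\varphi_p$ yields exactly your independent $N(0,\lambda_i)$ coordinates. Your explicit bookkeeping for substituting $\hat{\varphi}_i$ and $\hat{\lambda}_i$ via \eqref{spec-est}, the $O_P(1)$ tightness, and Slutsky's theorem is more detailed than anything the paper records, but it is the intended argument and is carried out correctly.
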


We now turn to the consistency and power properties of $F_T$ and $F_{T,p}$. The following result shows that both statistics diverge at rate $T$ under $H_{A,1}$, so long as the difference $C_{XY}-C_0$ is not orthogonal to the first $p$ elements of the  principal component basis $\{\varphi_i,\; i\ge 1\}$.

\begin{theorem}\label{alt-1-th} Under Assumption \ref{edep}, and $H_{A,1}$,

$$
\frac{F_T}{T} \stackrel{P}{\longrightarrow} \|C_{XY}-C_0\|^2, \mbox{ and } \; \frac{F_{T,p}}{T} \stackrel{P}{\longrightarrow} \sum_{\ell=1}^p \frac{\langle C_{XY}-C_0, \varphi_i \rangle^2_2}{\lambda_i}.
$$
Moreover, if $C_0=C_{0,T}$ satisfies $\| \sqrt{T}(C_{XY}-C_0)-C_A\|_2\to 0$ as $T\to \infty$ for some element $C_A$ of $L^2[0,1]^2$, then

\begin{align}\label{contig-1}
F_T \stackrel{D}{\longrightarrow} \|C_A\|_2^2 + 2 \sum_{i=1}^\infty \lambda^{1/2}_i \langle C_A, \varphi_i \rangle \mN_i + \sum_{i=1}^\infty \lambda_i \mN_i^2,
\end{align}
and

\begin{align*}
F_{T,p} \stackrel{D}{\longrightarrow} \sum_{i=1}^p \frac{\langle C_A , \varphi_i \rangle^2_2}{\lambda_i} + 2 \sum_{i=1}^p \frac{\langle C_A , \varphi_i \rangle_2 \mN_i}{\lambda^{1/2}_i} + \sum_{i=1}^p \mN_i^2 .
\end{align*}

\end{theorem}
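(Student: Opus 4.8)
The plan is to reduce every assertion to the Gaussian coupling of Theorem~\ref{th-1} evaluated at $x=1$, combined with the spectral expansion of the limiting Gaussian element in the eigenbasis $\{\varphi_i\}$ of $d$. Write $Z_T=\sqrt{T}\big(\hat C_{XY}(\cdot,\cdot,1)-C_{XY}\big)$, so that Theorem~\ref{th-1} gives $\|Z_T-\Gamma_T(\cdot,\cdot,1)\|_2=o_P(1)$, where for each $T$ the element $\Gamma_T(\cdot,\cdot,1)$ is a mean-zero Gaussian element of $L^2[0,1]^2$ with covariance operator $d$; in particular $\|Z_T\|_2=O_P(1)$ and $\|\Gamma_T(\cdot,\cdot,1)\|_2=O_P(1)$. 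For the consistency statements under $H_{A,1}$ with a fixed $C_0$, decompose $\hat C_{XY}(\cdot,\cdot,1)-C_0=T^{-1/2}Z_T+(C_{XY}-C_0)$, so that $F_T/T=\|\hat C_{XY}(\cdot,\cdot,1)-C_0\|_2^2$; expanding the square and bounding the cross term by Cauchy--Schwarz with $\|T^{-1/2}Z_T\|_2=o_P(1)$ gives $F_T/T\stackrel{P}{\longrightarrow}\|C_{XY}-C_0\|_2^2$. For $F_{T,p}/T$ the same decomposition gives $\langle\hat C_{XY}(\cdot,\cdot,1)-C_0,\hat\varphi_i\rangle_2=\langle C_{XY}-C_0,\hat\varphi_i\rangle_2+o_P(1)$; replacing $\hat\varphi_i$ by $\hat c_i\varphi_i$ at the cost of $o_P(1)$ via \eqref{spec-est} and Cauchy--Schwarz, squaring so that $\hat c_i^2=1$, and using $\hat\lambda_i\stackrel{P}{\longrightarrow}\lambda_i>0$ yields $F_{T,p}/T\stackrel{P}{\longrightarrow}\sum_{i=1}^p\langle C_{XY}-C_0,\varphi_i\rangle_2^2/\lambda_i$.

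For the contiguous-alternative limits, put $\delta_T=\sqrt{T}(C_{XY}-C_{0,T})$, a deterministic element of $L^2[0,1]^2$ with $\|\delta_T-C_A\|_2\to0$. Then $\sqrt{T}\big(\hat C_{XY}(\cdot,\cdot,1)-C_{0,T}\big)=Z_T+\delta_T=\Gamma_T(\cdot,\cdot,1)+C_A+o_P(1)$ in $L^2[0,1]^2$, and since $\big|\,\|u\|_2^2-\|v\|_2^2\,\big|\le\|u-v\|_2(\|u\|_2+\|v\|_2)$ it follows that $F_T=\|\Gamma_T(\cdot,\cdot,1)+C_A\|_2^2+o_P(1)$. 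Expanding in the eigenbasis, set $\xi_{i,T}=\langle\Gamma_T(\cdot,\cdot,1),\varphi_i\rangle_2$; the covariance $\operatorname{cov}(\Gamma_T(\cdot,\cdot,1),\Gamma_T(\cdot,\cdot,1))=d$ gives $E\xi_{i,T}\xi_{j,T}=\langle d\varphi_i,\varphi_j\rangle_2=\lambda_i\delta_{ij}$, so $\{\xi_{i,T}\}$ are independent with $\xi_{i,T}\sim N(0,\lambda_i)$, and writing $\xi_{i,T}=\lambda_i^{1/2}\mN_i$ and applying Parseval,
\begin{align*}
\|\Gamma_T(\cdot,\cdot,1)+C_A\|_2^2&=\sum_i\big(\lambda_i^{1/2}\mN_i+\langle C_A,\varphi_i\rangle_2\big)^2\\
&=\|C_A\|_2^2+2\sum_i\lambda_i^{1/2}\langle C_A,\varphi_i\rangle_2\,\mN_i+\sum_i\lambda_i\mN_i^2,
\end{align*}
which is the stated limit (and $C_A=0$ recovers Corollary~\ref{cor-1}). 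The two infinite series converge almost surely because the covariance operator of a Gaussian element of a Hilbert space is trace class, so $\sum_i\lambda_i<\infty$, which in turn bounds $\sum_i\lambda_i\langle C_A,\varphi_i\rangle_2^2\le\lambda_1\|C_A\|_2^2<\infty$.

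The $F_{T,p}$ limit under contiguous alternatives is the finite-dimensional projection of the same computation. For $1\le i\le p$, using \eqref{spec-est}, $\|Z_T\|_2=O_P(1)$, $\|\delta_T-C_A\|_2\to0$ and $\|Z_T-\Gamma_T(\cdot,\cdot,1)\|_2=o_P(1)$,
\begin{align*}
\langle Z_T+\delta_T,\hat\varphi_i\rangle_2=\hat c_i\langle Z_T+\delta_T,\varphi_i\rangle_2+o_P(1)=\hat c_i\big(\xi_{i,T}+\langle C_A,\varphi_i\rangle_2\big)+o_P(1).
\end{align*}
Since $(\xi_{1,T},\dots,\xi_{p,T})\sim N\big(0,\operatorname{diag}(\lambda_1,\dots,\lambda_p)\big)$ for every $T$, it is equal in law to $(\lambda_1^{1/2}\mN_1,\dots,\lambda_p^{1/2}\mN_p)$; squaring (so $\hat c_i^2=1$), dividing by $\hat\lambda_i\stackrel{P}{\longrightarrow}\lambda_i>0$, and applying the continuous mapping theorem and Slutsky's lemma jointly in $i$ gives
\begin{align*}
F_{T,p}\stackrel{D}{\longrightarrow}\sum_{i=1}^p\Big(\mN_i+\lambda_i^{-1/2}\langle C_A,\varphi_i\rangle_2\Big)^2
=\sum_{i=1}^p\frac{\langle C_A,\varphi_i\rangle_2^2}{\lambda_i}+2\sum_{i=1}^p\frac{\langle C_A,\varphi_i\rangle_2\,\mN_i}{\lambda_i^{1/2}}+\sum_{i=1}^p\mN_i^2,
\end{align*}
as claimed (with $C_A=0$ giving Corollary~\ref{cor-2}).

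The only non-mechanical point is the bookkeeping in passing from the empirical projections $\langle\,\cdot\,,\hat\varphi_i\rangle_2/\hat\lambda_i$ to the population ones: one must absorb the random sign $\hat c_i$, which is harmless once the inner products are squared, and control cross terms such as $\langle Z_T,\hat\varphi_i-\hat c_i\varphi_i\rangle_2$ uniformly over $i\le p$ using the rates in \eqref{spec-est}; all the probabilistic content is supplied by Theorem~\ref{th-1}. A minor subtlety is that $\Gamma_T$ depends on $T$, but its law does not, so the displayed weak limits are literally the laws of the indicated functionals of a single Gaussian element and no additional tightness argument is needed.
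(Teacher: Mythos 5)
Your proof is correct and takes essentially the same route as the paper: the contiguous-alternative limits come from the identical decomposition $\sqrt{T}(\hat C_{XY}(\cdot,\cdot,1)-C_{0,T})=\sqrt{T}(\hat C_{XY}(\cdot,\cdot,1)-C_{XY})+\sqrt{T}(C_{XY}-C_{0,T})$, expansion of the square, and identification of the limit via the Gaussian coupling of Theorem \ref{th-1} and the eigenexpansion of $\Gamma_T(\cdot,\cdot,1)$. The only (harmless) divergence is in the first part, where the paper invokes the ergodic theorem in Hilbert spaces while you deduce consistency of $\hat C_{XY}(\cdot,\cdot,1)$ directly from the $O_P(1)$ bound already supplied by Theorem \ref{th-1}; both are valid, and your version is slightly more self-contained.
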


This ``local-alternative" result provides insight into the complimentary strengths and weaknesses of both the norm based and dimension reduction based test statistics. Evidently if $C_{XY}-C_0$ is orthogonal to the first $p$ principal components of $d$, then the test statistic $F_{T,p}$ is not expected to have more than trivial power. Additionally in this case, the norm based test statistic $F_T$ is expected to have improved power over other alternatives in which $C_{XY}-C_0$ is of the same magnitude, since the second term on the right hand side \eqref{contig-1}, which has mean zero, will have a smaller variance in this case. Conversely, if $C_{XY}-C_0$ is contained in the subspace spanned by the first $p$ principal components of $d$, then $F_{T,p}$ is expected to be more powerful, since then this statistic effectively defines a uniformly most powerful test of $H_{0,1}$, assuming the data is Gaussian, in the $p$-dimensional subspace spanned by $\varphi_1,...,\varphi_p$.

In order to test $H_{0,2}$ versus $H_{A,2}$, we define analogously to $F_T$ and $F_{T,p}$,

$$
Z_T =  T \sup_{0 \le x \le 1}  \| \hat{C}_{XY}(\cdot,\cdot,x)- x\hat{C}_{XY}(\cdot,\cdot,1) \|_2^2.
$$
and
$$
Z_{T,p} =  T \sup_{0 \le x \le 1}  \sum_{i=1}^p \frac{ \langle \hat{C}_{XY}(\cdot,\cdot,x)- x\hat{C}_{XY}(\cdot,\cdot,1),\hat{\varphi}_i \rangle_2^2}{\hat{\lambda}_i}.
$$

$Z_T$ and $Z_{T,p}$ are each maximally selected CUSUM type statistics based on comparing the partial sample estimates of $C_{XY}$ to the estimator from the whole sample. The following corollaries of Theorem \ref{th-1} quantify the large sample behavior of these statistics under $H_{0,2}$.

\begin{cor}\label{cor-3} Under Assumption \ref{edep} and $H_{0,2}$,
\begin{align*}
Z_T \stackrel{D}{\longrightarrow} \sup_{0 \le x \le 1} \sum_{i=1}^\infty \lambda_i B^2_i(x),\mbox{ as } T \to \infty,
\end{align*}

where $\{\lambda_i, \; i\ge1 \}$ are defined in \eqref{eigeneq}, and $\{B_i(x)$, $i\ge 1,\;x\in[0,1]\}$ are iid standard Brownian bridges on $[0,1]$. If in addition \eqref{spec-est} holds, then

\begin{align*}
Z_{T,p} \stackrel{D}{\longrightarrow} \sup_{0 \le x \le 1} \sum_{i=1}^p  B^2_i(x),\mbox{ as } T \to \infty,
\end{align*}

\end{cor}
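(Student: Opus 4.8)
The plan is to reduce both displays, via the strong approximation of Theorem \ref{th-1}, to a functional of the Gaussian process $\Gamma_T$, and then to evaluate that functional through a Karhunen--Lo\`eve expansion of $\Gamma_T$ in the eigenbasis $\{\varphi_i\}$ of $d$. First note that under Assumption \ref{edep} the sequence $\{(X_i,Y_i)\}$ is jointly stationary, so that $H_{0,2}$ holds with a common cross-covariance $C_{XY}$, which is what permits us to invoke Theorem \ref{th-1}. Writing $\lfloor Tx\rfloor/T = x + r_T(x)$ with $\sup_{0\le x\le1}|r_T(x)|\le 1/T$, I would decompose
\begin{align*}
\sqrt{T}\big(\hat C_{XY}(t,s,x)-x\hat C_{XY}(t,s,1)\big)
&=\sqrt{T}\Big(\hat C_{XY}(t,s,x)-\tfrac{\lfloor Tx\rfloor}{T}C_{XY}(t,s)\Big)\\
&\quad -x\,\sqrt{T}\big(\hat C_{XY}(t,s,1)-C_{XY}(t,s)\big)+\sqrt{T}\,r_T(x)\,C_{XY}(t,s).
\end{align*}
Applying \eqref{th-1-eq} to the first term uniformly in $x$, to the second term at $x=1$, and bounding the last by $\sup_x\|\sqrt{T}r_T(x)C_{XY}\|_2\le T^{-1/2}\|C_{XY}\|_2\to 0$, the triangle inequality yields
\begin{align*}
\sup_{0\le x\le1}\Big\|\sqrt{T}\big(\hat C_{XY}(\cdot,\cdot,x)-x\hat C_{XY}(\cdot,\cdot,1)\big)-\big(\Gamma_T(\cdot,\cdot,x)-x\Gamma_T(\cdot,\cdot,1)\big)\Big\|_2=o_P(1).
\end{align*}
Since $|\sup_x a(x)-\sup_x b(x)|\le\sup_x|a(x)-b(x)|$, it follows that $\big|Z_T^{1/2}-\sup_x\|\Gamma_T(\cdot,\cdot,x)-x\Gamma_T(\cdot,\cdot,1)\|_2\big|=o_P(1)$, and the analogous quantity for $Z_{T,p}$ will be controlled below.

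Next I would compute the law of $\sup_x\|\Gamma_T(\cdot,\cdot,x)-x\Gamma_T(\cdot,\cdot,1)\|_2^2$, which is the same for every $T$ since the covariance of $\Gamma_T$, namely $\min(x,y)D(t,s,u,v)$, does not involve $T$. Under Assumption \ref{edep} the operator $d$ is trace class, with $\sum_i\lambda_i=\intt D(t,s,t,s)\,dt\,ds<\infty$; indeed the rate conditions $\alpha,\beta>1$ render the lagged covariance operators of $\{(X_i(t)-\mu_X(t))(Y_i(s)-\mu_Y(s))\}$ summable in trace norm. Hence $\Gamma_T$ admits the representation $\Gamma_T(t,s,x)=\sum_i\sqrt{\lambda_i}\,W_i(x)\varphi_i(t,s)$, where $\{W_i\}$ are iid standard Brownian motions and the series converges in $L^2[0,1]^2$ uniformly in $x$ almost surely (because $\sum_i\lambda_i\sup_x W_i^2(x)<\infty$ a.s.); that this process is centered Gaussian with the stated covariance follows from \eqref{eigeneq} and orthonormality of $\{\varphi_i\}$. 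Therefore $\Gamma_T(t,s,x)-x\Gamma_T(t,s,1)=\sum_i\sqrt{\lambda_i}\,B_i(x)\varphi_i(t,s)$ with $B_i(x)=W_i(x)-xW_i(1)$ iid standard Brownian bridges, and Parseval's identity gives $\|\Gamma_T(\cdot,\cdot,x)-x\Gamma_T(\cdot,\cdot,1)\|_2^2=\sum_{i=1}^\infty\lambda_i B_i^2(x)$, which converges uniformly in $x$ almost surely to a continuous function, so that its supremum is well defined. Combining with the previous paragraph, $Z_T\stackrel{D}{\longrightarrow}\sup_{0\le x\le1}\sum_{i=1}^\infty\lambda_i B_i^2(x)$.

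For $Z_{T,p}$, write $G_T(\cdot,\cdot,x)=\sqrt{T}\big(\hat C_{XY}(\cdot,\cdot,x)-x\hat C_{XY}(\cdot,\cdot,1)\big)$ and $H_T(\cdot,\cdot,x)=\Gamma_T(\cdot,\cdot,x)-x\Gamma_T(\cdot,\cdot,1)$, so the first paragraph gives $\sup_x\|G_T-H_T\|_2=o_P(1)$ and $\sup_x\|H_T\|_2=O_P(1)$, whence $\sup_x\|G_T\|_2=O_P(1)$. For $1\le i\le p$, Cauchy--Schwarz and \eqref{spec-est} give $\sup_x|\langle G_T(\cdot,\cdot,x),\hat\varphi_i-\hat{c}_i\varphi_i\rangle_2|\le(\sup_x\|G_T\|_2)\,\|\hat\varphi_i-\hat{c}_i\varphi_i\|_2=o_P(1)$, while $\langle H_T(\cdot,\cdot,x),\varphi_i\rangle_2=\sqrt{\lambda_i}B_i(x)$ from the expansion above; hence $\langle G_T(\cdot,\cdot,x),\hat\varphi_i\rangle_2=\hat{c}_i\sqrt{\lambda_i}B_i(x)+o_P(1)$ uniformly in $x$. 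Squaring (using $\hat{c}_i^2=1$ and $\sup_x|B_i(x)|=O_P(1)$) and dividing by $\hat\lambda_i$, which is bounded away from $0$ with probability tending to one since $\lambda_i>0$ for $i\le p$, yields $\langle G_T(\cdot,\cdot,x),\hat\varphi_i\rangle_2^2/\hat\lambda_i=B_i^2(x)+o_P(1)$ uniformly in $x$. Summing the $p$ terms, taking the supremum over $x$, and using the joint Gaussianity of the expansion (so $B_1,\dots,B_p$ are jointly iid standard Brownian bridges), one gets $Z_{T,p}=\sup_x\sum_{i=1}^p B_i^2(x)+o_P(1)\stackrel{D}{\longrightarrow}\sup_{0\le x\le1}\sum_{i=1}^p B_i^2(x)$.

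The step I expect to be the main obstacle is the bookkeeping needed to carry the $L^2$-in-$(t,s)$, uniform-in-$x$ approximation of Theorem \ref{th-1} cleanly through the centered CUSUM functionals --- controlling the rounding term and the whole-sample recentering --- and, in the case of $Z_{T,p}$, the propagation of the estimation error in $(\hat\varphi_i,\hat\lambda_i)$ via Cauchy--Schwarz and the $O_P(1)$ bound on $\sup_x\|G_T\|_2$. The other delicate point is the Karhunen--Lo\`eve representation of the $L^2[0,1]^2$-valued process $\Gamma_T$ together with the trace-class property of $d$; these are what guarantee that $\sum_i\lambda_i B_i^2(x)$ converges and that its supremum over $x\in[0,1]$ is almost surely finite and measurable.
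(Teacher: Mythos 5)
Your proposal is correct and follows essentially the same route as the paper, which proves the corollary by combining the strong approximation of Theorem \ref{th-1} with the Karhunen--Lo\`eve/Mercer representation $\Gamma_T \stackrel{D}{=} \sum_{\ell}\lambda_\ell^{1/2}W_\ell(x)\varphi_\ell(t,s)$, so that $\Gamma_T(\cdot,\cdot,x)-x\Gamma_T(\cdot,\cdot,1)$ has squared norm $\sum_i\lambda_i B_i^2(x)$. The paper leaves the CUSUM decomposition, the rounding term, and the propagation of the $(\hat\varphi_i,\hat\lambda_i)$ estimation error implicit; your write-up supplies these details correctly.
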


It follows then that a test of $H_{0,2}$ with asymptotic level $\alpha$ is obtained by rejecting if $Z_T$ or $Z_{T,p}$ are larger than the $1-\alpha$ quantiles of their limiting distributions detailed in Corollary \ref{cor-3}. These limiting distributions may again be approximated using Monte Carlo simulation. The statistics $Z_T$ and $Z_{T,p}$ diverge under $H_{A,2}$ in conjunction with some mild ergodicity assumptions, which we explain in Section \ref{alt-sec} in the appendix.

\section{Implementation } \label{imp}

Implementing the testing procedures outlined above requires the estimation of the eigenvalues, and, in case of the dimension reduction based test statistics $F_{T,p}$ and $Z_{T,p}$, the eigenfunctions of $d$. We first develop methodology for estimating $d$ and its spectrum, and then describe some numerical methods for carrying out this estimation.

\subsection{Estimation of $D$, $d$, $\varphi_i$ and $\lambda_i$}

As $d$ is simply a scalar multiple of the spectral density operator at frequency zero of the stationary sequence $\{(X_i(t)-\mu_X(t))(Y_i(s)-\mu_Y(s))\}$ in $L^2[0,1]^2$, as defined in \cite{panaretos:tavakoli:2012}, it may be naturally estimated with a smoothed periodogram type estimator. Let

\begin{align}\label{est-1}
\hat{D}_T(t,s,u,v)=\sum_{\ell=-\infty}^{\infty}W_{b} \left( \frac{\ell}{h} \right) \hat{\gamma}_\ell(t,s,u,v),
\end{align}
where $h$ is a bandwidth parameter satisfying,

\begin{align}\label{h-cond}
h=h(T) \to \infty,\; \frac{h}{T^{1/2}} \to 0 \mbox{ as } T \to \infty,
\end{align}

and, with $\bar{X}_j(t) = X_j(t) -\bar{X}(t)$ and $\bar{Y}_j(s)$ similarly defined,
\begin{align*}
   \hat{\gamma}_\ell(t,s,u,v)=\left\{
     \begin{array}{lr}
      \displaystyle \frac{1}{T}\sum_{j=1}^{T-\ell}\left( \bar{X}_j(t)\bar{Y}_j(s)-\hat{C}_{XY}(t,s,1)\right)\left( \bar{X}_{j+\ell}(u)\bar{Y}_{j+\ell}(v)-\hat{C}_{XY}(u,v,1)\right),\quad &\ell \ge 0
      \vspace{.3cm} \\
     \displaystyle \frac{1}{T}\sum_{j=1-\ell}^{T}\left( \bar{X}_j(t)\bar{Y}_j(s)-\hat{C}_{XY}(t,s,1)\right)\left( \bar{X}_{j+\ell}(u)\bar{Y}_{j+\ell}(v)-\hat{C}_{XY}(u,v,1)\right),\quad &\ell < 0.
     \end{array}
   \right.
\end{align*}

We take the function $W_b$ to be a symmetric and continuous weight function with bounded support of order $b$; see Chapter 7 of \cite{priestley:1981}. $\hat{D}_T$ then defines an estimator of $d$ by

$$
\hat{d}_T(f)(t,s) =  \intt \hat{D}_T(t,s,u,v) f(u,v)dudv,
$$
which further defines estimates of the eigenvalues and eigenfunctions of $d$ satisfying

\begin{align}\label{eig-est}
 \hat{d}_T(\hat{\varphi}_i)(t,s) =  \hat{\lambda}_i \hat{\varphi}_i(t,s).
\end{align}

In the simulations and application below, we take the weight function $W_b$ to be the simple Bartlett weight function, $W_1(x)= (1-|x|)\mathds{1}(|x|<1)$, which is of order 1, and $h = \lceil N^{1/5} \rceil$. Under Assumption \ref{edep}, we have the following result:

\begin{theorem}\label{th-est} Under Assumption \ref{edep}, $\| D - \hat{D}_T \|_4 =o_P(1).$
\end{theorem}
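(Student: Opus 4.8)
The plan is to run the classical bias--variance analysis for lag-window estimators, carried out in $L^2[0,1]^4$, plus one extra term accounting for the use of the demeaned curves. Write $Z_i(t,s)=(X_i(t)-\mu_X(t))(Y_i(s)-\mu_Y(s))$, $V_i=Z_i-C_{XY}$ (so $EV_i=0$), $\gamma_\ell(t,s,u,v)=E[V_0(t,s)V_\ell(u,v)]$, and let $\tilde D_T$ be the infeasible version of $\hat D_T$ obtained by replacing $\bar X,\bar Y,\hat C_{XY}(\cdot,\cdot,1)$ by $\mu_X,\mu_Y,C_{XY}$; thus $\tilde D_T=\sum_\ell W_b(\ell/h)\tilde\gamma_\ell$, where $\tilde\gamma_\ell$ is the corresponding sample average of the tensors $V_j\otimes V_{j+\ell}$ and $E\tilde\gamma_\ell=\frac{T-|\ell|}{T}\gamma_\ell$. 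I would then use the triangle inequality to split $\|\hat D_T-D\|_4$ into $\|E\tilde D_T-D\|_4$ (bias), $\|\tilde D_T-E\tilde D_T\|_4$ (stochastic variability), and $\|\hat D_T-\tilde D_T\|_4$ (demeaning error).

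The bias term is deterministic and reduces to absolute summability, $\sum_\ell\|\gamma_\ell\|_4<\infty$. This follows from Assumption \ref{edep}: for $\ell\ge 2$ the $\lfloor\ell/2\rfloor$-dependent coupling $V_{\ell,\lfloor\ell/2\rfloor}$ is independent of $V_0$ and has mean zero, so $\|\gamma_\ell\|_4=\|E[V_0\otimes(V_\ell-V_{\ell,\lfloor\ell/2\rfloor})]\|_4\le(E\|V_0\|_2^2)^{1/2}(E\|V_\ell-V_{\ell,\lfloor\ell/2\rfloor}\|_2^2)^{1/2}$; expanding the product $(X-\mu_X)(Y-\mu_Y)$ defining $V$ and applying H\"older with the conjugate pair $(p/2,p/(p-2))$ --- which is legitimate precisely because $p>4$ forces $2p/(p-2)<p$ --- bounds the last factor by $O(\ell^{-\min(\alpha,\beta)})$, and $\min(\alpha,\beta)>1$ gives the summability. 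Since $W_b$ is continuous and bounded with $W_b(0)=1$, the coefficients $W_b(\ell/h)\frac{T-|\ell|}{T}$ converge to $1$ for each $\ell$ and are uniformly bounded, so dominated convergence (dominating sequence a multiple of $\|\gamma_\ell\|_4$) yields $\|E\tilde D_T-D\|_4\to 0$. For the demeaning error, writing $\hat\gamma_\ell-\tilde\gamma_\ell$ as $\frac1T\sum_j\big((\hat U_j-V_j)\otimes\hat U_{j+\ell}+V_j\otimes(\hat U_{j+\ell}-V_{j+\ell})\big)$ with $\hat U_j=\bar X_j\bar Y_j-\hat C_{XY}(\cdot,\cdot,1)$, Cauchy--Schwarz reduces the bound to $\frac1T\sum_j\|\hat U_j-V_j\|_2^2=O_P(T^{-1})$ (which uses $\|\bar X-\mu_X\|_2,\|\bar Y-\mu_Y\|_2=O_P(T^{-1/2})$ and $\|\hat C_{XY}(\cdot,\cdot,1)-C_{XY}\|_2=O_P(T^{-1/2})$ from Theorem \ref{th-1}) together with $\frac1T\sum_j\|\hat U_j\|_2^2,\frac1T\sum_j\|V_j\|_2^2=O_P(1)$; since only $O(h)$ of the weights are nonzero, $\|\hat D_T-\tilde D_T\|_4=O_P(h T^{-1/2})=o_P(1)$ by \eqref{h-cond}.

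The main obstacle is the stochastic term, where I would show $E\|\tilde D_T-E\tilde D_T\|_4^2=O(h/T)\to 0$ (note $h/T^{1/2}\to 0$ implies $h/T\to 0$). Expanding the square gives $\sum_{\ell,\ell'}W_b(\ell/h)W_b(\ell'/h)\,\mathrm{Cov}_{L^2}(\tilde\gamma_\ell,\tilde\gamma_{\ell'})$ with $\mathrm{Cov}_{L^2}(\tilde\gamma_\ell,\tilde\gamma_{\ell'})=T^{-2}\sum_{j,j'}E\langle V_j\otimes V_{j+\ell}-\gamma_\ell,\,V_{j'}\otimes V_{j'+\ell'}-\gamma_{\ell'}\rangle_4$, and each summand decomposes, via the product-moment/cumulant identity, into two ``second-order'' pieces assembled from $\gamma$'s and one fourth-order joint cumulant of $(V_j,V_{j+\ell},V_{j'},V_{j'+\ell'})$. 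For the second-order pieces, reindexing $d=j'-j$, $k=\ell'-\ell$, using that at most $O(h)$ lags contribute to $\sum_\ell W_b(\ell/h)W_b((\ell+k)/h)$ and that $\sum_\ell\|\gamma_\ell\|_4<\infty$, gives the $O(h/T)$ bound; the fourth-cumulant piece is $O(1/T)$ once one has the summability $\sum_{a,b,c}\|\mathrm{cum}(V_0,V_a,V_b,V_c)\|<\infty$. Establishing this cumulant bound for the \emph{quadratic} sequence $\{V_i\}$ is the technical heart: I would apply the $m$-dependent coupling used above to the largest of the gaps among the indices $0,a,b,c$, use the multilinearity of cumulants, expand the product structure of $V$ via the Leonov--Shiryaev formula, and control the resulting moment factors by H\"older using the moment exponent of Assumption \ref{edep}; this is exactly where the strengthened decay rates $\alpha,\beta>1$ are needed, and the computation parallels the cumulant estimates for functional time series in \cite{panaretos:tavakoli:2012} and \cite{hormann:kokoszka:2010}, adjusted for the fact that $V_i$ is a product of two $L^p$-$m$-approximable sequences. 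Combining the three bounds then gives $\|\hat D_T-D\|_4=o_P(1)$.
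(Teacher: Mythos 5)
Your decomposition $\|\hat D_T-D\|_4\le\|\hat D_T-\tilde D_T\|_4+\|\tilde D_T-E\tilde D_T\|_4+\|E\tilde D_T-D\|_4$ matches the paper's first step: the demeaning error is exactly Lemma \ref{est-lem-1} (which the paper handles by expanding into $24$ remainder terms, each $O(T^{-1/2})$ in expected norm, and summing over the $O(h)$ active weights), and your more compact Cauchy--Schwarz treatment of that piece is fine. Your bias argument via the coupling $V_{\ell,\lfloor\ell/2\rfloor}$ is also sound and is essentially the content of Lemma \ref{lem-2}, which yields $\|\gamma_\ell\|_4=O(\ell^{-\min(\alpha,\beta)})$ with $\min(\alpha,\beta)>1$.

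The genuine gap is in the stochastic term. Your plan is to bound $E\|\tilde D_T-E\tilde D_T\|_4^2$ by expanding into second-order products of the $\gamma_\ell$'s plus fourth-order joint cumulants of $(V_j,V_{j+\ell},V_{j'},V_{j'+\ell'})$. Both of these require $E\|V_0\|_2^4<\infty$ (already the diagonal term $j=j'$, $\ell=\ell'$ produces $E\bigl[\|V_j\|_2^2\|V_{j+\ell}\|_2^2\bigr]$), and since $V_i$ is the product $(X_i-\mu_X)\otimes(Y_i-\mu_Y)$ this amounts to roughly $E\|X_0\|_1^8<\infty$ and $E\|Y_0\|_1^8<\infty$. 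Assumption \ref{edep} only supplies $p>4$ moments of $X$ and $Y$, hence only $E\|V_0\|_2^{p/2}$ with $p/2>2$ but possibly well below $4$; no application of H\"older can recover the missing moments, so the cumulant-summability step you call the ``technical heart'' is not available under the stated hypotheses. The paper circumvents exactly this: Lemma \ref{est-lem-2} observes that $\tilde\gamma_\ell$ is the ordinary autocovariance estimator of the mean-zero product sequence $\xi_i$, uses Lemma \ref{lem-2} to verify the \emph{second-moment} approximability conditions $\lim_m m\,(E\|\xi_i-\xi_{i,m}\|_2^2)^{1/2}=0$ and $\sum_m(E\|\xi_i-\xi_{i,m}\|_2^2)^{1/2}<\infty$, and then invokes Theorem~2 of \cite{horvath:kokoszka:reeder:2012}, whose proof proceeds by truncation and blocking rather than a variance/cumulant calculation. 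To make your self-contained route work you would either need to strengthen the moment assumption to $p>8$, or replace the $L^2$-variance bound by a truncation argument of that type.
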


In order to obtain consistent estimates of the first $p$ elements of the spectrum of $d$, we assume for the sake of simplicity that the first $p$ eigenvalues of $d$ are distinct, which implies that the corresponding eigenspaces of the first $p$ eigenvalues are one dimensional.
\begin{assumption}\label{eigen-as}
We assume that there exists an integer $p \ge 1$ satisfying that
$$\lambda_1 > \cdots > \lambda_p > \lambda_{p+1} \ge 0$$
where $\{ \lambda_i,\; i \ge 1\}$ are defined in \eqref{eigeneq}.
\end{assumption}

Assumption \ref{eigen-as} could be relaxed by utilizing some of the ideas presented in \cite{reimherr:2015}, but we do not pursue adapting those here. Under this assumption, the following result is implied by Theorem \ref{th-est} and the results in Section 6.1 of \cite{gohberg:1990}.

\begin{cor}
Under Assumption \ref{edep} and \ref{eigen-as}, \eqref{spec-est} holds.
\end{cor}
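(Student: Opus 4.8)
The plan is to deduce \eqref{spec-est} from the Hilbert--Schmidt convergence $\|D-\hat D_T\|_4=o_P(1)$ of Theorem \ref{th-est} via standard perturbation theory for the spectra of self-adjoint compact operators. First I would observe that $\hat D_T$ is symmetric in the sense that $\hat D_T(t,s,u,v)=\hat D_T(u,v,t,s)$: this follows from the symmetry of the weight function $W_b$ together with the identity $\hat\gamma_\ell(t,s,u,v)=\hat\gamma_{-\ell}(u,v,t,s)$, which is obtained by a reindexing of the sum defining $\hat\gamma_\ell$. Consequently $\hat d_T$, like $d$, is a self-adjoint Hilbert--Schmidt, hence compact, operator on $L^2[0,1]^2$, and the Hilbert--Schmidt norm of $\hat d_T-d$ equals $\|\hat D_T-D\|_4$. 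Since the operator norm $\|\cdot\|_{\mathcal L}$ is dominated by the Hilbert--Schmidt norm, Theorem \ref{th-est} yields $\|\hat d_T-d\|_{\mathcal L}\le\|\hat D_T-D\|_4=o_P(1)$.

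For the eigenvalues, Weyl's inequality, as recorded in Section 6.1 of \cite{gohberg:1990}, gives $\sup_{i\ge1}|\hat\lambda_i-\lambda_i|\le\|\hat d_T-d\|_{\mathcal L}$, whence $\max_{1\le i\le p}|\hat\lambda_i-\lambda_i|=o_P(1)$, which is the second assertion in \eqref{spec-est}. For the eigenfunctions, Assumption \ref{eigen-as} provides a strictly positive spectral gap $\delta:=\min_{1\le i\le p}(\lambda_{i-1}-\lambda_i,\ \lambda_i-\lambda_{i+1})$, with the convention $\lambda_0=+\infty$. On the event $\{\|\hat d_T-d\|_{\mathcal L}<\delta/2\}$, whose probability tends to $1$ by the previous paragraph, the eigenvalue stability just established implies that each $\hat\lambda_i$, $1\le i\le p$, is a simple eigenvalue of $\hat d_T$ separated from the rest of the spectrum of $\hat d_T$ by at least $\delta/2$; the Davis--Kahan-type bound for eigenprojections of self-adjoint operators, again in the form given in Section 6.1 of \cite{gohberg:1990} (see also Lemma 4.3 of \cite{bosq:2000}), then produces a universal constant $c$ with $\|\hat\varphi_i-\hat c_i\varphi_i\|_2\le(c/\delta)\|\hat d_T-d\|_{\mathcal L}$ for $1\le i\le p$, where $\hat c_i=\mathrm{sign}(\langle\varphi_i,\hat\varphi_i\rangle)$ resolves the sign indeterminacy. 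Maximising over $i\le p$ and applying Theorem \ref{th-est} once more gives $\max_{1\le i\le p}\|\hat\varphi_i-\hat c_i\varphi_i\|_2=o_P(1)$.

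The step requiring the most care is the passage from the spectral gap of $d$ to a comparable separation of $\hat\lambda_i$ from the other eigenvalues of $\hat d_T$, since the eigenfunction perturbation bound is vacuous unless $\hat\lambda_i$ is known to be isolated in the spectrum of $\hat d_T$. This is controlled entirely by the uniform eigenvalue stability $\sup_i|\hat\lambda_i-\lambda_i|\le\|\hat d_T-d\|_{\mathcal L}$: on the high-probability event above, $|\hat\lambda_j-\lambda_j|<\delta/2$ for every $j$, so no $\hat\lambda_j$ with $j\ne i$ can come within $\delta/2$ of $\hat\lambda_i$. The remaining ingredients are direct invocations of the perturbation estimates in \cite{gohberg:1990}.
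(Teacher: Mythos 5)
Your argument is correct and follows exactly the route the paper intends: the paper simply asserts that the corollary ``is implied by Theorem \ref{th-est} and the results in Section 6.1 of \cite{gohberg:1990}'', and your write-up supplies the details of that deduction --- self-adjointness of $\hat d_T$, domination of the operator norm by the Hilbert--Schmidt norm $\|\hat D_T - D\|_4$, the eigenvalue perturbation inequality, and the spectral-gap eigenfunction bound under Assumption \ref{eigen-as}. No gaps; your verification of the kernel symmetry $\hat\gamma_\ell(t,s,u,v)=\hat\gamma_{-\ell}(u,v,t,s)$ and your handling of the sign indeterminacy via $\hat c_i$ are exactly what the citation to \cite{gohberg:1990} (or equivalently Lemmas 4.2--4.3 of \cite{bosq:2000}) is meant to cover.
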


\subsection{Numerical implementation  }\label{num-imp}

Although so far we have presented results as if the functional data at hand were observed on their entire domains, in practice the data will consist of only discrete observations of the underlying functions. Let  $X_i(t_j)$ and $Y_i(t_j)$, $1\le j \le R$, denote the observed values of the functions $X_i(t)$ and $Y_i(t)$, observed at the common points $\{t_1,\hdots, t_R\}$. We assume here that each functional observation $X_i(t)$ and $Y_i(s)$ are observed at common points in their domains, as this matches our simulations and data example below, although this could be easily relaxed.

It is straightforward to estimate the test statistics $F_T$, $F_{T,p}$, $Z_T$, and $Z_{T,p}$ from the discrete data and simple Riemann sum approximations to the inner products and norms, so long as the eigenvalue and eigenfunction estimates $\hat{\lambda}_i$ and $\hat{\varphi}_i$ are given. However it is less clear how to estimate the eigen-elements satisfying \eqref{eig-est} from the discrete data. One could in principle estimate the eigenvalues and eigenfunctions  $(\lambda_i,\varphi_i(t,s)),\;\; 1\le i \le p$ of $d$ by calculating the spectrum of the 4-way tensor $\hat{D}_T(t_i,t_j,t_k,t_\ell)$, $1\le i,j,k,\ell \le R$ of dimension $R^4$, and employing linear interpolation to complete the eigenfunction estimates, but this becomes computationally infeasible for even moderate values of $R$ due to the sheer dimension of the tensor, and the fact that this tensor is typically {\it dense}.

What we propose instead is a dimension reduction based approach involving the functional principal components (fPC's) of the individual series $\{X_i\}$ and $\{Y_i\}$. Let
\[ \widehat{c}_X := \left[ \frac{1}{T} \sum_{\ell=1}^T (X_\ell(t_i) - \bar{X}(t_i))(X_\ell(t_j) - \bar{X}(t_j)):\ 1 \leq i,j \leq R \right] \]
be the sample covariance matrix of the discretized observations of the $X$ sample, and define $\widehat{c}_Y$ similarly. Calculating the spectrum of  $\widehat{c}_X$ and $\widehat{c}_Y$ yields eigenvalues $\{\nu_{X,1}, \hdots, \nu_{X,R}\}$, $\{\nu_{Y,1}, \hdots, \nu_{Y,R}\}$ and eigenvectors, $\{\theta_{X,1}, \hdots, \theta_{X,R}\}$, $\{\theta_{Y,1}, \hdots, \theta_{Y,R}\}$, the latter of which, when multiplied by $\sqrt{R}$, yield discrete approximations to the fPC's of the sequences $\{X_i\}$ and $\{Y_i\}$, $\theta_{X,i}(t_j)$, $\theta_{Y,i}(t_j)$, $1\le i,j \le R$, respectively.

We then use the product basis of $L^2[0,1]^4$ generated by these functions to reduce the dimension of $\hat{D}_T$. The projections of $\hat{D}_T$ onto the product basis of the first $q$ elements each of $\theta_{X,i}(t)$, $\theta_{Y,i}(t)$ may be stored in a 4-way tensor with $q^4$ elements, $\mathcal{M}$, via
\[ \mathcal{M}_{ijkr} = \int \cdots \int \hat{D}_T(t,s,u,v) \theta_{X, i}(t) \theta_{Y, j}(s ) \theta_{X, k}(u) \theta_{Y, r}(v )dtdsdudv.\]
Each of these elements can be estimated with a simple Riemann sum approximation. The cutoff $q$ must be selected by the user. We suggest using the total variance explained approach for this: we take $q$ so that $\theta_{X,i}(t)$ and $\theta_{Y,i}(t)$, $1\le i \le q$, explains at least $v\%$ of the variation in each series, where $v$ is close to but strictly smaller than 1.

The eigenvalues $\{ \hat{\lambda}_i, 1 \le i \le q^2\}$ and eigen-arrays $\{\hat{{\bf \Phi}}_i \in \mathbb{R}^{q \times q}, 1\le i \le q^2 \}$ of $\mathcal{M}$ satisfy

\[  \mathcal{M} \hat{{\bf \Phi}}_\ell = \hat{\lambda}_\ell \hat{{\bf \Phi}}_\ell,\;\;1\le \ell \le q^2,\;\mbox{where}\;\; \mathcal{M} \hat{{\bf \Phi}}_\ell[i,j] = \sum_{k,r = 1}^q \mathcal{M}_{ijkr}
\hat{{\bf \Phi}}_\ell[k,r].  \].

This eigenvalue problem may be solved numerically by solving for the eigenvalues/vectors of a $q^2$ by $q^2$ square matrix that is ``tiled" with the cross-sections of $\mathcal{M}$, as is implemented in \texttt{svd.tensor} function in the the package \texttt{tensorA} in \texttt{R}; see \cite{boogaart2015}. The eigenfunctions $\varphi_i(t,s)$ may then be estimated with

$$
\hat{\varphi}_i(t,s) = \sum_{j,k=1}^{q^2} \hat{{\bf \Phi}}_i[j,k]\theta_{X,j}(t)\theta_{Y,k}(s).
$$
We use these estimates for the $\hat{\lambda}_i's$ and $\hat{\varphi}_i's$ in the simulations and application below.

\section{Simulation Study } \label{simul}

\subsection{Outline}
We now present the results of a small simulation experiment which aimed to study the finite sample properties of the test statistics introduced above. All of the simulations reported below were done using the \texttt{R} language (\cite{rcore2015}). The number of potential experimental settings that could be considered to study Theorem \ref{th-1} and Corollaries \ref{cor-1}-\ref{cor-3} is enormous. For the sake of brevity, we mainly focused on demonstrating that for a somewhat rich class of data generating processes (DGP's) exhibiting serial correlation, the tests based on $F_T$, $F_{T,p}$, $Z_T$, and $Z_{T,p}$ hold their size well, and that the eigenvalue and eigenfunction estimation procedure explained in Section \ref{num-imp} is adequate for such hypothesis testing problems. Towards this goal, we considered the following basic structure for generating synthetic data depending on the parameter $\alpha \in [0,1]$:

\begin{align}\label{DGP}
X_i(t) = \alpha \varepsilon_{c,i}(t) + (1-\alpha) \varepsilon_{x,i}(t),\;\; Y_i(t) = \alpha \varepsilon_{c,i}(t) + (1-\alpha) \varepsilon_{y,i}(t),\;\; 1\le i \le T
\end{align}
where $\varepsilon_{c,i}(t)$, $\varepsilon_{x,i}(t) $, and $\varepsilon_{y,i}(t)$ are mutually independent sequences that satisfy either of two models given below. In this case, the functional series $X_i(t)$ and $Y_i(t)$ are correlated through their common dependence on $\varepsilon_{c,i}(t)$, with the strength of this dependence controlled by $\alpha$. If $\alpha=0$ for instance, then the two sequences $X_i$ and $Y_i$ are independent. We took the error sequences to satisfy either:

\begin{itemize}
  \item[]{\bf IID:} $\varepsilon_{c,i}(t)=W_{c,i}(t)$, $\varepsilon_{x,i}(t)=W_{x,i}(t) $, and $\varepsilon_{y,i}(t)=W_{y,i}(t)$, where $\{W_{c,i}(t)\}$, $\{W_{x,i}(t)\}$, $\{W_{y,i}(t)\}$ are mutually independent sequences of IID standard Brownian motions.
  \item[]{\bf FAR(1):}  $\varepsilon_{c,i}(t)=\int \Phi(t,s)\varepsilon_{c,i}(s)ds+W_{c,i}(t)$, $\varepsilon_{x,i}(t)=\int \Phi(t,s)\varepsilon_{x,i}(s)ds+W_{x,i}(t)$, and $\varepsilon_{y,i}(t)=\int \Phi(t,s)\varepsilon_{y,i}(s)ds+W_{y,i}(t)$, where $\{W_{c,i}(t)\}$, $\{W_{x,i}(t)\}$, and $\{W_{y,i}(t)\}$ are defined above, and $\Phi(t,s)=\min(t,s)$.
\end{itemize}
The sequence $\{(X_i,Y_i)\}$ satisfying \eqref{DGP} with error sequences satisfying either of the above models satisfy Assumption \ref{edep}. The choice of the Brownian motions for the innovation sequences is partially motivated by our application to intraday returns data below, see Figure \ref{fig-PCIDR}. For each setting of $T$ and $\alpha$, the data was generated on an equally spaced grid on $[0,1]$ with $R=100$ points, and the simulation was repeated 1000 times for each setting in order to calculate the empirical size and power curves presented below. We chose $q$ to be 3 in the estimation of procedure for for the eigenvalues and eigenfunctions in Section \ref{imp}, which in the vernacular of fPCA corresponds to a total variance explained of around 93\% on average for each sequence $X_i$ and $Y_i$. We considered larger values of $q$, and found that it did not have much of an effect on the results, although the computational time increases substantially as $q$ increases. We similarly took $n=3$ in the definition of $F_{T,p}$ and $Z_{T,p}$.

To study the size of the test of $H_{0,1}$ based on $F_T$ and $F_{T,3}$, we generated data according to \eqref{DGP} for each setting of the error sequence with $\alpha=0$, and performed tests with nominal levels of 10\%, 5\%, and 1\% of $H_{0,1}: C_{XY}=0$. The rejection rates from 1000 simulations from each statistic are reported in Table \ref{h01tab}. One thing that was clear based on these simulations was that the tests based on either statistic tended to be somewhat oversized. This issue improves as $T$ increases, as predicted by Corollaries \ref{cor-1} and \ref{cor-2}, and all tests achieved quite good size once $T\ge 300$. The presence of serial correlation pronounces the size inflation of the tests, however this is fairly well controlled by the simple weight function-bandwidth smoothed periodogram estimator. This could be improved further by increasing the bandwidth at the cost of increased computational time, according to unreported simulations. The fact that the empirical levels are quite close to nominal and improve with increasing $T$ is indicative that the eigenvalue/eigenfunction estimation described in Section \ref{num-imp} is performing adequately for this application. We aim in future work to improve the size properties of the test under serial correlation by implementing a data driven bandwidth selection routine, and to consider alternative methods for estimating the eigenvalues/eigenfunctions of $d$.

{\small
\begin{table}[h]
\begin{center}
\begin{tabular}{c ccc c ccc c}
\toprule
 \multicolumn{9}{c}{Statistic: $F_T$} \\
 & \multicolumn{3}{c}{IID} & & \multicolumn{3}{c}{FAR(1)} & \\
\cmidrule{2-4} \cmidrule{6-8}
T & 10\% &5\%  &1\% & & 10\% &5\% & 1\% & \\
\cmidrule{1-1} \cmidrule{2-4} \cmidrule{6-8}
50  & 0.133 & 0.073 & 0.013 & & 0.155 & 0.087 & 0.018 &\\
100 & 0.129 & 0.061 & 0.015 & & 0.148 & 0.069 & 0.020 &\\
300 & 0.114 & 0.049 & 0.008 & & 0.139 & 0.077 & 0.018 &\\
\hline
 \multicolumn{9}{c}{Statistic: $F_{T,3}$} \\
 & \multicolumn{3}{c}{IID} & & \multicolumn{3}{c}{FAR(1)} & \\
\cmidrule{2-4} \cmidrule{6-8}
T & 10\% &5\%  &1\% & & 10\% &5\% & 1\% & \\
\cmidrule{1-1} \cmidrule{2-4} \cmidrule{6-8}
50 &  0.142 & 0.078 & 0.026 & & 0.171 & 0.096 & 0.025 & \\
100 & 0.118 & 0.073 & 0.019 & & 0.189 & 0.087 & 0.034 & \\
300 & 0.121 & 0.041 & 0.012 & & 0.134 & 0.076 & 0.021 & \\
\bottomrule
\end{tabular}
\caption{Empirical sizes with nominal levels of 10\%, 5\%, and 1\% for a test of $H_{0,1}:\ C_{XY} = 0$ where the data was generated according to \eqref{DGP} with $\alpha=0$.}\label{h01tab}
\end{center}
\end{table}
}

We also studied the empirical power of $F_T$ and $F_{T,p}$ by testing $H_{0,1}: C_{XY}=0$ with data following \eqref{DGP} and $\alpha$ increasing from $0$ to $1$. The results of this simulation are reported in the case FAR(1) errors as power curves in Figure \ref{fig-powcur}. We observed that both tests were powerful for large enough values of $\alpha$, although it is difficult to quantify their power without a comparable test. Interestingly, the simple norm based test based on $F_T$ possessed better power than $F_{T,3}$ in all the examples that we considered.

\begin{figure}
        \centering
 \mbox{\subfigure{\includegraphics[width=3.1in]{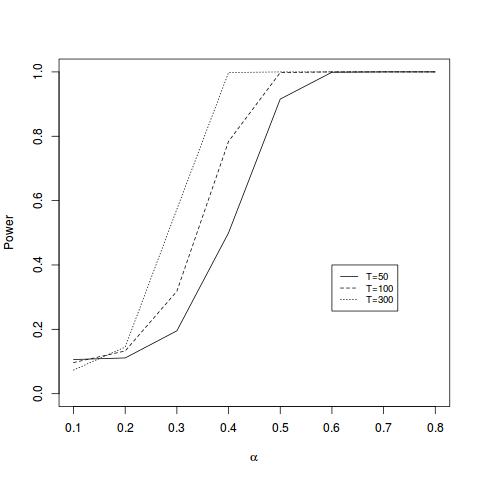}} \subfigure{\includegraphics[width=3.1in]{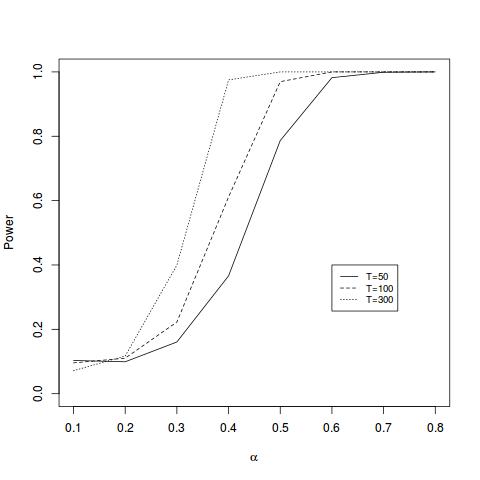} }   } \\
 \caption{Power curves as a function of $\alpha$ ranging from 0 to 0.8 at increments of 0.1, for a test $H_{0,1}:C_{XY}=0$ with level of 5\% applied to $(X_i,Y_i)$ following \eqref{DGP} with FAR(1) errors using $F_T$ (left hand panel), and $F_{T,3}$ (right hand panel).}\label{fig-powcur}
\end{figure}

Regarding the test of $H_{0,2}$ based on $Z_T$ and $Z_{T,3}$, we only present the results of an empirical size study. In this case we applied each statistic to data generated according to \eqref{DGP} with $\alpha=0$ and $\alpha=0.5$ for $T=100,300$ and $1000$. The empirical size from 1000 independent simulations for each setting is presented in Table \ref{tab-cp}. We saw that in general the change point tests based on $Z_T$ and $Z_{T,3}$ had quite good size, even in the presence of serial correlation. Changing $\alpha$ had no effect on the size of these tests, as expected.

{\small
\begin{table}[h]
\begin{center}
\begin{tabular}{c c ccc c ccc c}
\toprule
\multicolumn{9}{c}{Statistic: $Z_{T}$} \\
 & & \multicolumn{3}{c}{IID} & & \multicolumn{3}{c}{FAR(1)} & \\
\cmidrule{3-5} \cmidrule{7-9}
T & $\alpha$ & 10\% &5\%  &1\% & & 10\% &5\% & 1\% & \\
\cmidrule{1-1} \cmidrule{2-2} \cmidrule{3-5} \cmidrule{7-9}
100	 & 0	 & 0.093 & 0.048 & 0.007 & & 0.149 & 0.087 & 0.031 & \\
	   & 0.5 & 0.125 & 0.066 & 0.020 & & 0.094 & 0.046	& 0.006 & \\
300	 & 0	 & 0.102 & 0.047 & 0.008 & & 0.122 & 0.079 & 0.025 & \\
		 & 0.5 & 0.115 & 0.060 & 0.020 & & 0.121 & 0.057 & 0.015 & \\
1000 & 0   & 0.133	& 0.071 & 0.006 & & 0.146 & 0.063 & 0.010 & \\
		 & 0.5 & 0.123	& 0.058 & 0.012 & & 0.139 & 0.075 & 0.012 & \\
\hline
\multicolumn{9}{c}{Statistic: $Z_{T,3}$} \\
 & & \multicolumn{3}{c}{IID} & & \multicolumn{3}{c}{FAR(1)} & \\
\cmidrule{3-5} \cmidrule{7-9}
T & $\alpha$ & 10\% &5\%  &1\% & & 10\% &5\% & 1\% & \\
\cmidrule{1-1} \cmidrule{2-2} \cmidrule{3-5} \cmidrule{7-9}
100	 & 0	  & 0.108 & 0.060 & 0.016 & & 0.166 & 0.098 & 0.027 & \\
		 & 0.5	& 0.114 & 0.070 & 0.025 & & 0.132 & 0.058 & 0.016 & \\
300	 & 0	  & 0.132 & 0.073 & 0.021 & & 0.132 & 0.067 & 0.011 & \\
	   & 0.5	& 0.121 & 0.056 & 0.015 & & 0.104 & 0.062 & 0.013& \\
1000 & 0	  & 0.143 & 0.082 & 0.024 & & 0.102 & 0.058 & 0.012 & \\
	   & 0.5	& 0.126 & 0.067 & 0.014 & & 0.094 & 0.052 & 0.010 & \\
\bottomrule
\end{tabular}
\caption{ Empirical sizes with nominal levels of 10\%, 5\%, and 1\% for a test of $H_{0,2}:\ C_{XY}^{(1)} = \hdots = C_{XY}^{(T)}$ where the data was generated according to \eqref{DGP} with $\alpha=0$ and $\alpha=0.5$. }\label{tab-cp}
\end{center}
\end{table}
}

\section{Application to cumulative intraday returns}\label{app}

A natural example of functional time series data are those derived from densely recorded asset price data, such as intraday stock price data. Recently there has been an upsurge in quantitative research focused on analyzing the information contained within curves constructed from such data; we refer the reader to \cite{barndorff:2004}, \cite{wang:zou:2010}, \cite{gabrys:horvath:kokoszka:2010}, \cite{muller:sen:stadtmuller:2011}, and \cite{kokoszka:reimherr:2013}. Price curves associated with popular companies are commonly displayed at websites like yahoo.com/finance, and the ``patterns" in them, or lack thereof, are of apparent interest to day traders.

The specific data that we consider was obtained from www.nasdaq.com, and consists of 1 minute resolution closing prices of a single share of Microsoft (ticker \texttt{MSFT}), and Exxon Mobile (ticker \texttt{XOM}) stock from January 2nd to December 31st, 2001, which comprises data from 248 trading days ($T=248$) with $R=389$ observations per day. We aimed to apply the methods introduced in Sections \ref{i-1} to study the cross-covariance structure between these two companies stock prices on the intraday scale. Let $P_{M,i}(t_j),$ and $P_{X,i}(t_j)$ $i=1,\ldots, T, j=1,\ldots, R$, denote the prices of Microsoft and Exxon mobile stock on day $i$ at intraday time $t_j$, respectively. The first three price curves of each series constructed from the raw price data and linear interpolation are displayed in the left hand panel of Figure \ref{fig-PCIDR}. The functional time series of price curves are evidently nonstationary due to frequent level shifts and volatility, and hence we considered the following transformation of these curves akin to taking the log returns for scalar price data:

\begin{defi} \label{def:icr}
Suppose $P_{i}(t_j), i=1,\ldots, T, j=1,\ldots, R$, is the price of
a financial asset at time $t_j$ on day $i$. The functions
\[
r_{i}(t_j) = 100[\ln P_{i}(t_j)-\ln P_{i}(t_1)], \ \ \ \
j = 1, 2,\ldots, R,\ \ \ \
i =1, \ldots, T,
\]
are called the {\em  cumulative intraday returns} (CIDR's).
\end{defi}

Since the logarithm is increasing, the CIDR curves have nearly the same shape as the original daily price curves, but the assumption of stationarity is much more plausible for these curves. According to their definition, the CIDR's always start from zero, so level stationarity is enforced, and taking the logarithm helps reduce potential scale inflation. The stationarity of CIDR curves derived from intraday stock price data was argued empirically in \cite{horvath:kokoszka:rice:2014}.

Let $X_i(t)=r_{M,i}(t)$ and $Y_i(t)=r_{X,i}(t)$, $1 \le i \le T$, denote the CIDR curves derived from the Microsoft and Exxon Mobile stock price data and linear interpolation, respectively. The first three CIDR curves of each series are plotted in the right hand panel of Figure \ref{fig-PCIDR}.

\begin{figure}
        \centering
 \mbox{\subfigure{\includegraphics[width=3.1in]{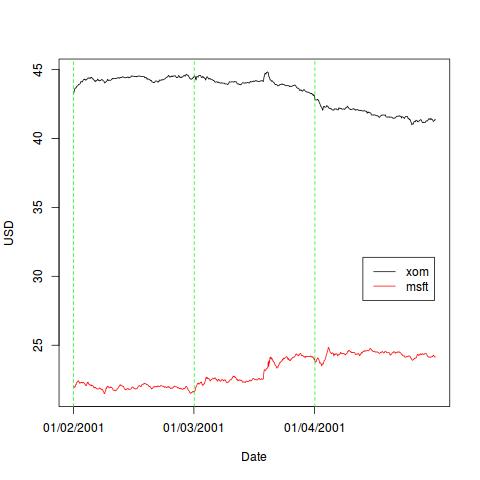}} \subfigure{\includegraphics[width=3.1in]{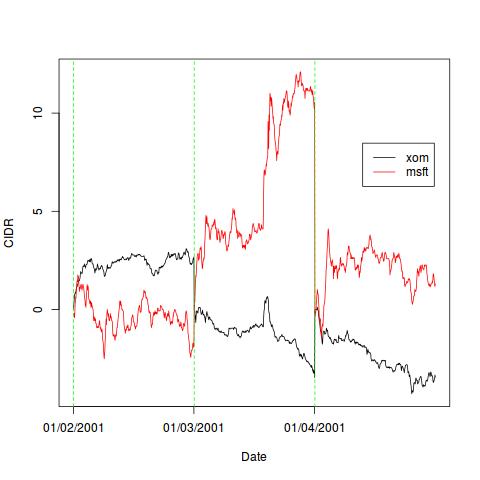} }   }
        \caption{The left hand panel displays three intraday price curves derived from the one-minute resolution closing prices of a single share of \texttt{XOM} and \texttt{MSFT}. The right hand panel displays the corresponding CIDR curves.}\label{fig-PCIDR}

\end{figure}

Before estimating and measuring the significance of the cross-covariance between these functional time series of CIDR's, we first tested for its homogeneity within the sample, a test of $H_{0,2}$, using the test statistic $Z_T$. For this analysis, we took $q=3$ for the method outline in Section \ref{num-imp}, which corresponded to approximately 94\% variance explained in each series. A ``CUSUM chart" of $c(t)=T \| \hat{C}_{XY}(\cdot,\cdot,t/T)- (t/T)\hat{C}_{XY}(\cdot,\cdot,1) \|_2^2$ versus $t$ is given in Figure \ref{fig-cusum} with the corresponding 10\%, 5\% and 1\% significance levels of the estimated limiting distribution in Corollary \ref{cor-1}. The statistic $Z_T$ far exceeded the 1\% level, which indicates that there is strong evidence that the covariance relationship is heterogenous within the sample. Also apparent in the plot, the largest difference between the partial sample cross-covariance estimate occurs on March 22nd, 2001. We segmented the data into two sub-samples before and after this point of lengths $T_1=56$ and $T_2=192$, respectively, and again tested for the homogeneity of the cross-covariance within each sub-sample. In both sub-samples the homogeneity could not be rejected with any significance. Interestingly, the sample including the intraday  \texttt{XOM} and \texttt{MSFT} returns data before and after the terrorist attacks on September 11, 2001 did not seem to exhibit a change point in the cross-covariance relationship. The date March 22nd, 2001 does however seem to be in the proximity of some fairly important events relative to the world oil market. On March 17th, OPEC had announced that they were cutting oil production by 4\%, and just three days later the largest off shore oil rig in the world, Petrobras 36, sank off of the coast of Brazil.

In order to measure the significance of the estimates of the cross-covariance before and after the change point, we applied a test of $H_{0,1}:C_{XY}=0$ to each sub-sample. The null hypothesis was strongly rejected in both cases with $p$-values smaller than 0.000. The cross-covariance surfaces are displayed in Figure \ref{fig-covs}, from which we can see that the shape of the cross-covariance is quite different before and after the initial change point: the surface goes from being negative to predominantly positive.

\begin{figure}
        \centering
         \mbox{\subfigure{\includegraphics[width=3.1in]{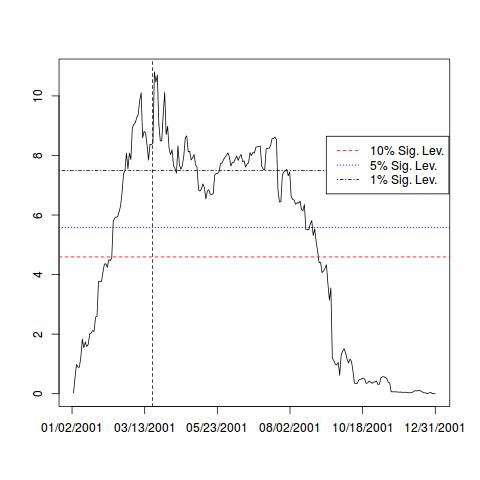}}   }
 \mbox{\subfigure{\includegraphics[width=3.1in]{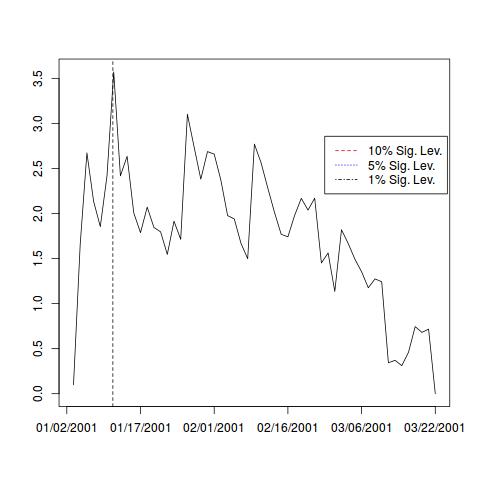}} \subfigure{\includegraphics[width=3.1in]{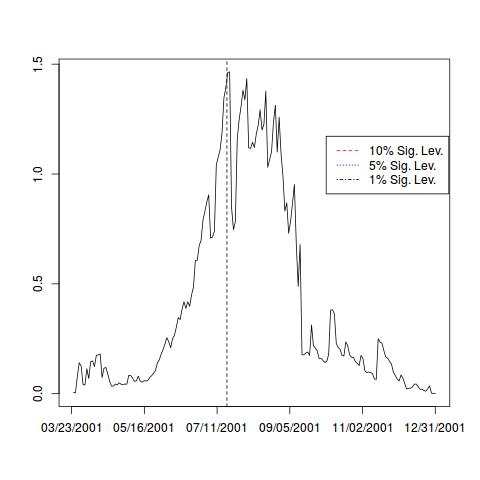} }   }
        \caption{ The top panel displays the ``CUSUM chart" of  $\| \hat{C}_{XY}(\cdot,\cdot,t/T)- (t/T)\hat{C}_{XY}(\cdot,\cdot,1) \|_2^2$ versus $t$ calculated from the \texttt{XOM} and \texttt{MSFT} price data from 2001. The maximum value $Z_T$ is significant to the 0.01 level, and is achieved at the value $t$ corresponding to March 22nd. The lower left and lower right panels show similar CUSUM charts for the sub samples of data before and after this point, respectively.}\label{fig-cusum}

\end{figure}

\begin{figure}
        \centering
 \mbox{\subfigure{\includegraphics[width=3.1in]{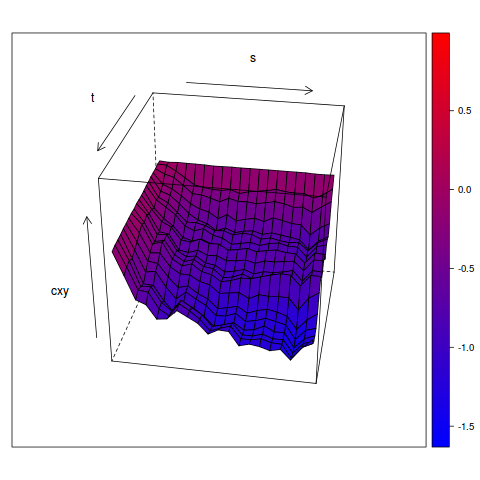}} \subfigure{\includegraphics[width=3.1in]{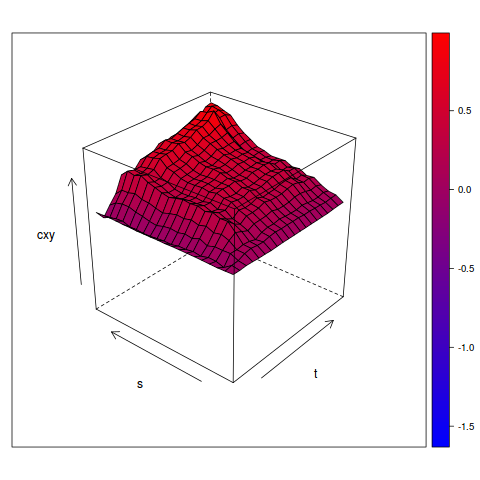} }   }
        \caption{The left and right hand panels display the estimator $\hat{C}_{XY}$ before and after the date 3/22/2001, respectively. Both of these surface estimates were measured to be significantly different from zero, as the test of $H_{0,1}: \; C_{XY}=0$  based on $F_T$  yielded $p$-values less than 0.000 in both cases. }\label{fig-covs}

\end{figure}

\pagebreak

\bibliographystyle{plain}

\bibliography{glp}

\begin{thebibliography}{10}

\bibitem{aston:kirch:2012}
J.~Aston and C.~Kirch.
\newblock Detecting and estimating epidemic changes in dependent functional
  data.
\newblock {\em Journal of Multivariate Analysis}, 109:204--220, 2012.

\bibitem{aue:hormann:horvath:reimherr:2009}
A.~Aue, S.~H{\"o}rmann, L.~Horv{\'a}th, and M.~Reimherr.
\newblock Break detection in the covariance structure of multivariate time
  series models.
\newblock {\em The Annals of Statistics}, 37:4046--4087, 2009.

\bibitem{aue:horvath:pellatt:2016}
A.~Aue, L.~Horv\'ath, and D.~F. Pellatt.
\newblock Functional generalized autoregressive conditional heteroskedasticity.
\newblock Technical report, University of California, Davies, 2015.

\bibitem{aue:norinho:hormann:2014}
A.~Aue, D.~D. Norinho, and S.~H{\"o}rmann.
\newblock On the prediction of stationary functional time series.
\newblock {\em Journal of the American Statistical Association}, 110:378--392,
  2014.

\bibitem{baker:1973}
C.~Baker.
\newblock Joint measures and cross-covariance operators.
\newblock {\em Transactions of the american mathematical society},
  186:273--289, 1973.

\bibitem{barndorff:2004}
O.~E. Barndorff-Nielsen and N.~Shephard.
\newblock Econometric analysis of realized covariance: High frequency based
  covariance, regression and correlation in financial economics.
\newblock {\em Econometrica}, 72:885--925, 2004.

\bibitem{berkes:horvath:rice:2013}
I.~Berkes, L.~Horv{\'a}th, and G.~Rice.
\newblock Weak invariance principles for sums of dependent random functions.
\newblock {\em Stochastic Processes and Their Applications}, 123:385--403,
  2013.

\bibitem{bosq:2000}
D.~Bosq.
\newblock {\em Linear {P}rocesses in {F}unction {S}paces}.
\newblock Springer, New York, 2000.

\bibitem{brillinger:1975}
D.~R. Brillinger.
\newblock {\em Time {S}eries: Data {A}nalysis and {T}heory}.
\newblock Holt, New York, 1975.

\bibitem{wendler2:2016}
B.~Buccia and M.~Wendler.
\newblock Change-point detection and bootstrap for hilbert space valued random
  fields.
\newblock {\em Journal of Multivariate Analysis}, 155:344--368, 2017.

\bibitem{chiou:chen:yang:2014}
J-M. Chiou, Y-T. Chen, and Y-F. Yang.
\newblock Multivariate functional principal component analysis: A normalization
  approach.
\newblock {\em Statistica Sinica}, 24:1571–1596, 2014.

\bibitem{chiou:muller:2013}
J-M. Chiou and H-G. M{\"u}ller.
\newblock Linear manifold modeling of multivariate functional data.
\newblock {\em Journal of the Royal Statistical Society Series B}, 76:605--626,
  2013.

\bibitem{dette:wu:zhou:2015}
H.~Dette, W.~Wu, and Z.~Zhou.
\newblock Change point analysis of second order characteristics in
  non-stationary time series.
\newblock {\em Technical Report}, 2015.

\bibitem{dubin:muller:2005}
J.~Dubin and H-G. M{\"u}ller.
\newblock Dynamical correlation for multivariate longitudinal data.
\newblock {\em Journal of the American Statistical Association}, 100:872--881,
  2005.

\bibitem{ferraty:vieu:2006}
F.~Ferraty and P.~Vieu.
\newblock {\em {N}onparametric {F}unctional {D}ata {A}nalysis: Theory and
  {P}ractice}.
\newblock Springer, New York, 2006.

\bibitem{gabrys:horvath:kokoszka:2010}
R.~Gabrys, L.~Horv{\'a}th, and P.~Kokoszka.
\newblock Tests for error correlation in the functional linear model.
\newblock {\em Journal of the American Statistical Association},
  105:1113--1125, 2010.

\bibitem{gohberg:1990}
I.~Gohberg, S.~Golberg, and M.~A. Kaashoek.
\newblock {\em Classes of {L}inear {O}perators}, volume~49 of {\em {O}perator
  {T}heory: {A}dvances and {A}pplications}.
\newblock Birkha{\"u}ser, 1990.

\bibitem{himdi:roy:duchesne:2003}
K.~El Himdi, R.~Roy, and P.~Duchesne.
\newblock Tests for non--correlation of two multivariate time series: a
  nonparametric approach.
\newblock {\em IMS Lecture Notes Monogr.\ Ser., Mathematical Statistics and
  Applications: Festschrift for Constance van Eeden}, 42:397--416, 2003.

\bibitem{hormann:kidzinski:hallin:2015}
S.~H{\"o}rmann, L.~Kidzi\'nski, and M.~Hallin.
\newblock Dynamic functional principal components.
\newblock {\em Journal of the Royal Statistical Society, Ser. B}, 77:319--348,
  2015.

\bibitem{hormann:kidzinski:kokoszka:2015}
S.~H{\"o}rmann, L.~Kidzi\'nski, and P.~Kokoszka.
\newblock Estimation in functional lagged regression.
\newblock {\em Journal of Time Series Analysis}, 36:541--561, 2015.

\bibitem{hormann:kokoszka:2010}
S.~H{\"o}rmann and P.~Kokoszka.
\newblock Weakly dependent functional data.
\newblock {\em The Annals of Statistics}, 38:1845--1884, 2010.

\bibitem{hormann:kokoszka:2012}
S.~H{\"o}rmann and P.~Kokoszka.
\newblock Functional time series.
\newblock In C.~R. Rao and T.~Subba Rao, editors, {\em Time {S}eries},
  volume~30 of {\em Handbook of {S}tatistics}. Elsevier, 2012.

\bibitem{horvath:huskova:rice:2013}
L.~Horv{\'a}th, M.~Hu{\v{s}}kov{\'{a}}, and G.~Rice.
\newblock Testing independence for functional data.
\newblock {\em Journal of Multivariate Analysis}, 117:100--119, 2013.

\bibitem{horvath:kokoszka:reeder:2012}
L.~Horv{\'a}th, P.~Kokoszka, and R.~Reeder.
\newblock Estimation of the mean of functional time series and a two sample
  problem.
\newblock {\em Journal of the Royal Statistical Society (B)}, 74, 2012.

\bibitem{horvath:kokoszka:rice:2014}
L.~Horv{\'a}th, P.~Kokoszka, and G.~Rice.
\newblock Testing stationarity of functional time series.
\newblock {\em Journal of Econometrics}, 179:66--82, 2014.

\bibitem{h:r:2014ti}
L.~Horv\'ath and G.~Rice.
\newblock Testing for independence between two functional time series.
\newblock {\em Journal of Econometrics}, 189:371--382, 2015.

\bibitem{jirak:2013}
M.~Jirak.
\newblock On weak invariance principles for sums of dependent random
  functionals.
\newblock {\em Statistics and Probability Letters}, 83:2291--2296, 2013.

\bibitem{kargin:onatski:2008}
V.~Kargin and A.~Onatski.
\newblock Curve forecasting by functional autoregression.
\newblock {\em Journal of Multivariate Analysis}, 99:2508--2526, 2008.

\bibitem{kokoszka:reimherr:2013}
P.~Kokoszka and M.~Reimherr.
\newblock Asymptotic normality of the principal components of functional time
  series.
\newblock {\em Stochastic Processes and their Applications}, 123:1546--1562,
  2013.

\bibitem{muller:sen:stadtmuller:2011}
H-G. M{\"u}ller, R.~Sen, and U.~Stadtm{\"u}ller.
\newblock Functional data analysis for volatility.
\newblock {\em Journal of Econometrics}, 165:233--245, 2011.

\bibitem{panaretos:2010}
V.~M. Panaretos, D.~Kraus, and J.~H. Maddocks.
\newblock Second-order comparison of {G}aussian random functions and the
  geometry of {DNA} minicircles.
\newblock {\em Journal of the American Statistical Association}, 105:670--682,
  2010.

\bibitem{panaretos:tavakoli:2012}
V.~M. Panaretos and S.~Tavakoli.
\newblock Fourier analysis of stationary time series in function space.
\newblock {\em Annals of Statistics}, 41(2):568--603, 2012.

\bibitem{panaretos:tavakoli:2015}
V.~M. Panaretos and S.~Tavakoli.
\newblock Detecting and localizing differences in functional time series
  dynamics: A case study in molecular biophysics.
\newblock {\em Journal of the American Statistical Association},
  111(515):1020--1035, 2016.

\bibitem{paparoditis:spatinas:2015}
E.~Paparoditis and T.~Sapatinas.
\newblock Bootstrap-based k sample testing for functional data.
\newblock {\em Biometrika}, 103:727--733, 2016.

\bibitem{petersen:muller:2016}
A.~Petersen and H-G. M{\"u}ller.
\newblock Fr\'echet integration and adaptive metric selection for interpretable
  covariance of multivariate functional data.
\newblock {\em Biometrika}, 103:103--120, 2016.

\bibitem{pham:panaretos:2017}
T.~Pham and V.~Panaretos.
\newblock Methodology and convergence rates for functional time series
  regression.
\newblock {\em Technical Report}, 2017.

\bibitem{pigoli:2014}
D.~Pigoli, J.~Aston, I.~Dryden, and P.~Secchi.
\newblock Distances and inference for covariance operators.
\newblock {\em Biometrika}, 101:409--422, 2014.

\bibitem{priestley:1981}
M.~B. Priestley.
\newblock {\em Spectral {A}nalysis and {T}ime {S}eries}.
\newblock Academic Press, 1981.

\bibitem{ramsay:silverman:2005}
J.~O. Ramsay and B.~W. Silverman.
\newblock {\em Functional {D}ata {A}nalysis}.
\newblock Springer, 2005.

\bibitem{reimherr:2015}
M.~Reimherr.
\newblock Functional regression with repeated eigenvalues.
\newblock {\em Statistics and Probability Letters}, 107:62--70, 2015.

\bibitem{serban:staicu:carroll:2013}
N.~Serban, A-M. Staicu, and R-J. Carroll.
\newblock Multilevel cross-dependent binary longitudinal data.
\newblock {\em Biometrics}, 69:903--913, 2013.

\bibitem{wendler1:2016}
O.~Sharipov, J.~Tewes, and M.~Wendler.
\newblock Sequential block bootstrap in a hilbert space with application to
  change point analysis.
\newblock {\em Canadian Journal of Statistics}, 44:300--322, 2016.

\bibitem{rcore2015}
R~Core Team.
\newblock {\em R: A Language and Environment for Statistical Computing.}, 2015.
\newblock R package version $\ge$ 2.2.0.

\bibitem{torgovitski:20162}
L.~Torgovitski.
\newblock {Darling-Erd\"{o}s-type CUSUM-procedure for functional data II}.
\newblock Working paper, University of Cologne, 2016.

\bibitem{boogaart2015}
K.~Gerald van~den Boogaart.
\newblock {\em tensorA: Advanced tensors arithmetic with named indices}, 2010.
\newblock R package version $\ge$ 2.2.0.

\bibitem{wang:zou:2010}
Y.~Wang and J.~Zou.
\newblock Vast volatility matrix estimation for high--frequency financial data.
\newblock {\em The Annals of Statistics}, 38:953--978, 2010.

\bibitem{weid1:2012}
D.~Wied, M.~Arnold, N.~Bissantz, and D.~Ziggel.
\newblock A new fluctuation test for constant variances with applications to
  finance.
\newblock {\em Metrika}, 75:1111--1127, 2012.

\bibitem{wied2:2012}
D.~Wied, W.~Kr\"amer, and H.~Dehling.
\newblock Testing for a change in correlation at an unknown point in time using
  an extended functional delta method.
\newblock {\em Econometric Theory}, 28:570--589, 2012.

\bibitem{zhang:2016}
X.~Zhang.
\newblock White noise testing and model diagnostic checking for functional time
  series.
\newblock {\em Journal of Econometrics}, 194:76--95, 2016.

\bibitem{zhang:shao:2015}
X.~Zhang and X.~Shao.
\newblock Two sample inference for the second-order property of temporally
  dependent functional data.
\newblock {\em Bernoulli}, 21:909--929, 2015.

\bibitem{zhou:huang:carroll:2008}
L.~Zhou, J.~Huang, and R-J. Carroll.
\newblock Joint modelling of paired sparse functional data using principal
  components.
\newblock {\em Biometrika}, 95:601--619, 2008.

\end{thebibliography}

\appendix

\section{Proofs of technical results} \label{proofs}

\subsection{Proof of Theorem \ref{th-1}}

In order to prove Theorem \ref{th-1}, we may assume without loss of generality that $\mu_X=\mu_Y=0$. Let

$$
\tilde{C}_{XY}(t,s,x) = \frac{1}{T} \sum_{i=1}^{\lfloor Tx \rfloor} X_i(t)Y_i(s).
$$

\begin{lemma}\label{lem-1}
Under Assumption \ref{edep},

$$
\sup_{0\le x \le 1 } \|\hat{C}_{XY}(\cdot,\cdot ,x) -\tilde{C}_{XY}(\cdot,\cdot ,x)\|_2 = O_P\left(\frac{1}{T}\right).
$$

\begin{proof}
According to the definitions of $\hat{C}_{XY}$ and $\tilde{C}_{XY}$ and the triangle inequality,

\begin{align}\label{approx-1}
 \|\hat{C}_{XY}(\cdot,\cdot ,x) -\tilde{C}_{XY}(\cdot,\cdot ,x)\|_2 &= \frac{1}{T} \Biggl\| \sum_{i=1}^{\Tx} (X_i - \bar{X})\otimes (Y_i- \bar{Y}) - X_i \otimes Y_i   \Biggl\|_2 \\
 &= \frac{1}{T}  \Biggl\| \sum_{i=1}^{\Tx} ( - \bar{X} \otimes Y_i- X_i \otimes \bar{Y} + \bar{X} \otimes \bar{Y} )   \Biggl\|_2 \\
 &\le \frac{1}{T}  \Biggl\| \sum_{i=1}^{\Tx} \bar{X} \otimes Y_i \Biggl\|_2 + \frac{1}{T} \Biggl\|\sum_{i=1}^{\Tx} X_i \otimes \bar{Y}\Biggl\|_2 + \frac{1}{T} \Biggl\|\sum_{i=1}^{\Tx} \bar{X} \otimes \bar{Y}   \Biggl\|_2 \\
&= G_1(x)+G_2(x)+G_3(x).
\end{align}

For the term $G_1(x)$, we have that

\begin{align}\label{g1-eq}
G_1(x) = \frac{1}{T} \Biggl\| \sum_{i=1}^{\Tx} \bar{X} \otimes Y_i \Biggl\|_2= \frac{1}{T}  \|\bar{X}\|_1\Biggl\| \sum_{i=1}^{\Tx}Y_i \Biggl\|_1.
\end{align}

Assumption \ref{edep} implies that both the series $X_i$ and $Y_i$ satisfy the conditions of Theorem 3.3 of Berkes et al \cite{berkes:horvath:rice:2013}, from which it follows that

$$
\|\bar{X}\|_1=O_P\left(\frac{1}{\sqrt{T}}\right),\mbox{ and }\sup_{0 \le x \le 1} \Biggl\| \sum_{i=1}^{\Tx}Y_i \Biggl\|_1=O_P\left(\sqrt{T}\right).
$$
This combined with \eqref{g1-eq} implies that $\sup_{0\le x \le 1}G_1(x)=O_P(1/T)$. Parrel arguments show that $\sup_{0\le x \le 1}G_2(x)=O_P(1/T)$ and $\sup_{0\le x \le 1}G_3(x)=O_P(1/T)$, from which the result follows in light of \eqref{approx-1}.
\end{proof}
\end{lemma}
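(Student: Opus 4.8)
The plan is to follow the authors' reduction to the mean-zero case $\mu_X=\mu_Y=0$ and then expand the difference $\hat{C}_{XY}-\tilde{C}_{XY}$ algebraically. Writing $(X_i-\bar{X})\otimes(Y_i-\bar{Y})-X_i\otimes Y_i = -\bar{X}\otimes Y_i - X_i\otimes\bar{Y} + \bar{X}\otimes\bar{Y}$, summing over $1\le i\le\lfloor Tx\rfloor$, dividing by $T$, and using the triangle inequality in $L^2[0,1]^2$ splits the target into three pieces $G_1(x)$, $G_2(x)$, $G_3(x)$. The useful structural fact is that each piece factors through one of the centering averages: by the identity $\|a\otimes b\|_2=\|a\|_1\|b\|_1$ for $a,b\in L^2[0,1]$, one has $G_1(x)=T^{-1}\|\bar{X}\|_1\,\bigl\|\sum_{i=1}^{\lfloor Tx\rfloor}Y_i\bigr\|_1$, $G_2(x)=T^{-1}\bigl\|\sum_{i=1}^{\lfloor Tx\rfloor}X_i\bigr\|_1\,\|\bar{Y}\|_1$, and $G_3(x)\le\|\bar{X}\|_1\|\bar{Y}\|_1$ uniformly in $x$.

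Two probabilistic inputs then finish the argument, both flowing from Assumption \ref{edep}. First, a functional law of large numbers, $\|\bar{X}\|_1=O_P(T^{-1/2})$ (and the same for $\bar{Y}$): this follows from Markov's inequality after checking $E\|\bar{X}\|_1^2=T^{-2}\sum_{i,j=1}^T E\langle X_i,X_j\rangle=O(T^{-1})$, which holds because the $L^p$-$m$-approximability condition (with $\alpha,\beta>1$) forces the lag-$\ell$ autocovariance kernels of $\{X_i\}$ to be absolutely summable in $L^1$ norm, as in \cite{hormann:kokoszka:2010}. Second, a uniform partial-sum bound, $\sup_{0\le x\le1}\bigl\|\sum_{i=1}^{\lfloor Tx\rfloor}Y_i\bigr\|_1=O_P(T^{1/2})$ (and likewise for $X$): I would obtain this from the weak invariance principle for $L^p$-$m$-approximable Hilbert-space-valued sequences, e.g.\ Theorem 3.3 of \cite{berkes:horvath:rice:2013}, which gives $T^{-1/2}\sum_{i=1}^{\lfloor Tx\rfloor}Y_i$ converging weakly in the Skorokhod space of $L^2[0,1]$-valued paths to a Brownian motion, and hence tightness of $\sup_x T^{-1/2}\bigl\|\sum_{i\le\lfloor Tx\rfloor}Y_i\bigr\|_1$. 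Assumption \ref{edep} is set up precisely so that both marginal sequences $\{X_i\}$ and $\{Y_i\}$ satisfy the hypotheses of that theorem.

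Combining these, $\sup_x G_1(x)=T^{-1}\|\bar{X}\|_1\sup_x\bigl\|\sum_{i=1}^{\lfloor Tx\rfloor}Y_i\bigr\|_1=T^{-1}O_P(T^{-1/2})O_P(T^{1/2})=O_P(T^{-1})$, the symmetric estimate gives $\sup_x G_2(x)=O_P(T^{-1})$, and $\sup_x G_3(x)\le\|\bar{X}\|_1\|\bar{Y}\|_1=O_P(T^{-1})$; adding the three bounds yields the claimed rate. The main obstacle is the uniform partial-sum estimate: a fixed-$x$ central limit theorem does not suffice since the supremum over $x$ sits outside the norm, so one genuinely needs a maximal inequality / invariance principle for weakly dependent functional data — but under Assumption \ref{edep} this is available off the shelf, and the rest of the proof is routine bookkeeping with the triangle inequality and the product structure of tensor-product norms.
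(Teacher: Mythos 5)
Your proposal is correct and follows essentially the same route as the paper: the same algebraic expansion into $G_1$, $G_2$, $G_3$, the same factorization $\|a\otimes b\|_2=\|a\|_1\|b\|_1$, and the same appeal to the invariance principle of Berkes et al.\ for the uniform partial-sum bound $\sup_x\|\sum_{i\le \lfloor Tx\rfloor}Y_i\|_1=O_P(\sqrt{T})$. The only (harmless) deviation is that you derive $\|\bar{X}\|_1=O_P(T^{-1/2})$ directly from a second-moment computation rather than from the same cited theorem, which is a perfectly valid alternative.
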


\begin{lemma}\label{lem-2} Under Assumption \ref{edep} and if $q=p/2$ with $p$ defined in Assumption \ref{edep}, then

$$
(E\|X_i \otimes Y_i - X_{i,m} \otimes Y_{i,m}\|_2^q )^{1/q}=O(m^{-\gamma}),
$$
where $\gamma=\min(\alpha,\beta)$, and $X_{i,m}$ and $Y_{i,m}$ are defined in Assumption \ref{edep}.

\begin{proof}

We have according to the triangle inequalities in $L^2[0,1]^2$ and for $(E(\cdot)^q)^{1/q}$ that

\begin{align}\label{lem21}
(E\| X_i \otimes  Y_i - X_{i,m} \otimes Y_{i,m} \|_2^q )^{1/q} &= (E\| X_i \otimes Y_i - X_i \otimes Y_{i,m}+ X_{i,m} \otimes Y_i -  X_{i,m} \otimes Y_{i,m} \|_2^q )^{1/q} \\
&\le (E (\| X_i \otimes Y_i - X_i \otimes Y_{i,m} \|_2 + \|X_{i,m} \otimes Y_i -  X_{i,m} \otimes Y_{i,m} \|_2)^q )^{1/q} \notag \\
&\le (E \| X_i \otimes Y_i - X_i \otimes Y_{i,m} \|_2^q)^{1/q} + (E\|X_{i,m} \otimes Y_i -  X_{i,m} \otimes Y_{i,m} \|_2^q )^{1/q} \notag \\
&=   (E \| X_i \|_1^q \| Y_i - Y_{i,m} \|_1^q)^{1/q} + (E \| Y_i \|_1^q \| X_i - X_{i,m} \|_1^q)^{1/q} \notag.
\end{align}
According to the Cauchy-Schwarz inequality and stationarity, we have that

\begin{align}\label{lem22}
E \| X_i \|_1^q \| Y_i - Y_{i,m} \|_1^q \le (E \| X_i \|_1^{2q})^{1/2} E(\| Y_i - Y_{i,m} \|_1^{2q})^{1/2} = (E \| X_0 \|_1^{2q})^{1/2} E(\| Y_0 - Y_{0,m} \|_1^{2q})^{1/2}.
\end{align}
One obtains a similar bound with the roles of $X$ and $Y$ swapped, which combined with the last line of \eqref{lem21} we get that (with $p=2q$)

$$
(E\| X_i \otimes Y_i - X_{i,m} \otimes Y_{i,m} \|_2^q )^{1/q} \le (E \| X_0 \|_1^{p})^{1/p} (E\| Y_0 - Y_{0,m} \|_1^{p})^{1/p}+ (E \| Y_0 \|_1^{p})^{1/p} (E\| X_0 - X_{0,m} \|_1^{p})^{1/p} .
$$
Since both $(E \| X_0 \|_1^{p})^{1/p}$ and $(E \| Y_0 \|_1^{p})^{1/p}$ are finite, and according to Assumption \ref{edep} $(E\|X_i-X_{i,m}\|^p\big)^{1/p}=O(m^{-\alpha}),\mbox{ and } (E\|Y_i-Y_{i,m}\|^p\big)^{1/p}=O(m^{-\beta})$, we obtain that

$$
(E\|X_i \otimes Y_i - X_{i,m} \otimes Y_{i,m}\|_2^q )^{1/q}=O(m^{-\gamma}),
$$
where $\gamma=\min(\alpha,\beta)$ as needed.

\end{proof}

\end{lemma}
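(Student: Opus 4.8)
The plan is to reduce the difference of tensor products to a sum of two tensor products, each of which factors into a pure $X$-factor and a pure $Y$-factor, so that the rates in Assumption \ref{edep}(b) apply termwise after a Cauchy--Schwarz split. First I would add and subtract $X_i \otimes Y_{i,m}$ to obtain the telescoping identity
\[
X_i \otimes Y_i - X_{i,m} \otimes Y_{i,m} = X_i \otimes (Y_i - Y_{i,m}) + (X_i - X_{i,m}) \otimes Y_{i,m},
\]
and then use the elementary identity $\|f \otimes g\|_2 = \|f\|_1 \|g\|_1$ for $f,g \in L^2[0,1]$ together with the triangle inequality in $L^2[0,1]^2$ to get
\[
\|X_i \otimes Y_i - X_{i,m} \otimes Y_{i,m}\|_2 \le \|X_i\|_1 \|Y_i - Y_{i,m}\|_1 + \|X_i - X_{i,m}\|_1 \|Y_{i,m}\|_1.
\]

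Next I would take $q$-th moments of both sides, apply Minkowski's inequality in $L^q(\Omega)$ to split the right-hand side, and bound each resulting term by the Cauchy--Schwarz inequality on the probability space, for instance
\[
\big(E\|X_i\|_1^q \|Y_i - Y_{i,m}\|_1^q\big)^{1/q} \le \big(E\|X_i\|_1^{2q}\big)^{1/(2q)} \big(E\|Y_i - Y_{i,m}\|_1^{2q}\big)^{1/(2q)}.
\]
Since $2q = p$, and using joint stationarity of $\{(X_i,Y_i)\}$ (and of $\{(X_{i,m},Y_{i,m})\}_i$ for each fixed $m$) to shift all indices to $0$, the first term is bounded by $(E\|X_0\|_1^p)^{1/p}(E\|Y_0 - Y_{0,m}\|_1^p)^{1/p}$, and symmetrically the second term is bounded by $(E\|Y_{0,m}\|_1^p)^{1/p}(E\|X_0 - X_{0,m}\|_1^p)^{1/p}$.

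Finally I would note that $(E\|X_0\|_1^p)^{1/p}$ and $(E\|Y_0\|_1^p)^{1/p}$ are finite, and that $\sup_m (E\|X_{0,m}\|_1^p)^{1/p}$ and $\sup_m (E\|Y_{0,m}\|_1^p)^{1/p}$ are finite, since by the triangle inequality $\|Y_{0,m}\|_{L^p} \le \|Y_0\|_{L^p} + \|Y_0 - Y_{0,m}\|_{L^p}$ and the second summand is $O(m^{-\beta}) = O(1)$ by Assumption \ref{edep}(b), and likewise for $X$. Combining these uniform bounds with the decay rates $(E\|X_0 - X_{0,m}\|^p)^{1/p} = O(m^{-\alpha})$ and $(E\|Y_0 - Y_{0,m}\|^p)^{1/p} = O(m^{-\beta})$ from Assumption \ref{edep}(b) shows the first term is $O(m^{-\beta})$ and the second is $O(m^{-\alpha})$, so their sum is $O(m^{-\min(\alpha,\beta)}) = O(m^{-\gamma})$, as claimed. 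I expect no real obstacle here: the only points needing a little care are the bookkeeping of exponents when passing through Cauchy--Schwarz with $2q = p$, and explicitly recording the uniform-in-$m$ moment bounds on $X_{0,m}$ and $Y_{0,m}$, which is what licenses pulling the constant factors out front.
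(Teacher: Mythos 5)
Your proposal is correct and follows essentially the same route as the paper: the telescoping decomposition $X_i\otimes(Y_i-Y_{i,m})+(X_i-X_{i,m})\otimes Y_{i,m}$, the identity $\|f\otimes g\|_2=\|f\|_1\|g\|_1$, Minkowski plus Cauchy--Schwarz with $2q=p$, stationarity, and the rates from Assumption \ref{edep}(b). Your explicit uniform-in-$m$ bound on $(E\|Y_{0,m}\|_1^p)^{1/p}$ is a careful touch the paper glosses over (it is in fact automatic, since $Y_{0,m}$ is equal in distribution to $Y_0$ by construction), but it does not change the argument.
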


\begin{proof}[Proof of Theorem \ref{th-1}]
According to Lemma \ref{lem-1}, it is enough to prove Theorem \ref{th-1} for the process $\tilde{C}_{XY}(t,s,x)$. $\tilde{C}_{XY}(t,s,x)-(\Tx / T) C_{XY}(t,s)$ is the partial sum of the random functions $\xi_i(t,s)= X_i(t)Y_i(s)-EX_0(t)Y_0(s)$ which form a mean zero and stationary sequence in $L^2[0,1]^2$. Let $\xi_{i,m}(t,s)= X_{i,m}(t)Y_{i,m}(s)-EX_0(t)Y_0(s)$, denote an $m$-dependent approximation to $\xi_i$. It follows directly from $\ref{lem-2}$ that
$$
(E \| \xi_i - \xi_{i,m} \|_2^q)^{1/q} = O(m^{-\gamma}),
$$
where $\gamma>1$, and this implies that

$$
\sum_{m=1}^\infty (E \| \xi_i - \xi_{i,m} \|_2^q)^{1/q} < \infty.
$$

It now follows from Theorem 1 of \cite{jirak:2013} that if

$$
 \Xi_T(t,s,x) :=\sqrt{T}\left(\tilde{C}_{XY}(t,s,x)-\frac{\Tx }{ T} C_{XY}(t,s) \right)= \frac{1}{\sqrt{T}} \sum_{i=1}^{\Tx} \xi_i(t,s),
$$
then

\begin{align}\label{th-1-eq-a}
\sup_{0\le x \le 1} \intt \left( \Xi_T(t,s,x) -\Gamma_T(t,s,x)\right)^2dtds = o_P(1),
\end{align}

where $\Gamma_T$ has the properties attributed in Theorem \ref{th-1}, which establishes the theorem.

\end{proof}

To see how Corollaries \ref{cor-1}-\ref{cor-3} follow from this result, we note that for all $T$,

$$
\{\Gamma_T(t,s,x) : \; t,s,x\in[0,1]\} \stackrel{D}{=} \{\Gamma_0(t,s,x) : \; t,s,x\in[0,1]\},
$$
where

$$
\Gamma_0(t,s,x)= \sum_{\ell=1}^{\infty } \lambda_\ell^{1/2} W_\ell(x)\varphi_\ell(t,s),
$$
with $\{W_\ell(t),\; t\in[0,1],\; \ell \in \mathbb{N}\}$ being iid standard Brownian motions, and $(\lambda_\ell$ $\varphi_\ell)$ being the eigenvalues and eigenfunctions of the operator $d$. This follows from simply calculating the mean and covariance function of $\Gamma_0$ and using Mercer's theorem.

\begin{proof}[Proof of Theorem 3.1]
  The first part of the theorem follows directly from the ergodic theorem in Hilbert spaces; see for example Appendix A of \cite{horvath:huskova:rice:2013}. In order to get the second result, we have using the assumption that $\sqrt{T}(C_{XY} - C_0) \stackrel{L^2[0,1]^2}{\to} C_A$ and Theorem \ref{th-1} that, in case of $F_T$,

\begin{align*}
  F_T &= T \| \hat{C}_{XY}(\cdot,\cdot,1) - C_{XY}+C_{XY} - C_0 \|_2^2 \\
   &=  T \langle (\hat{C}_{XY}(\cdot,\cdot,1) - C_{XY})+(C_{XY} - C_0), (\hat{C}_{XY}(\cdot,\cdot,1) - C_{XY})+(C_{XY} - C_0) \rangle_2 \\
   &=  T \| \hat{C}_{XY}(\cdot,\cdot,1) - C_{XY}\|_2^2+ 2\langle \sqrt{T}(\hat{C}_{XY}(\cdot,\cdot,1) - C_{XY}) ,\sqrt{T}(C_{XY} - C_0) \rangle_2 +T\|C_{XY} - C_0\|_2^2\\
   &\stackrel{D}{\to}  \sum_{i=1}^\infty \lambda_i \mN_i^2 + 2 \sum_{i=1}^\infty \lambda^{1/2}_i \langle C_A, \varphi_i \rangle \mN_i + \|C_A\|_2^2 , \;\; (T\to \infty),
\end{align*}
  as needed. The result for $F_{T,p}$ follows similarly.
\end{proof}

\subsection{Proof of Theorem \ref{th-est}}

Let

\begin{align}\label{est-2}
\tilde{D}_T(t,s,u,v)=\sum_{\ell=-\infty}^{\infty}W_{b} \left( \frac{\ell}{h} \right) \tilde{\gamma}_\ell(t,s,u,v),
\end{align}

with $h$ defined in \eqref{h-cond}, $W_b$ being a bounded, symmetric, and continuous weight function of order $b$, and

\begin{align*}
   \tilde{\gamma}_\ell(t,s,u,v)=\left\{
     \begin{array}{lr}
      \displaystyle \frac{1}{T}\sum_{j=1}^{T-\ell}\left( X_j(t)Y_j(s)-EX_0(t)Y_0(s) \right)\left( X_{j+\ell}(t)Y_{j+\ell}(s)-EX_0(t)Y_0(s)\right),\quad &\ell \ge 0
      \vspace{.3cm} \\
     \displaystyle \frac{1}{T}\sum_{j=1-\ell}^{T}\left( X_j(t)Y_j(s)-EX_0(t)Y_0(s) \right)\left( X_{j+\ell}(t)Y_{j+\ell}(s)-EX_0(t)Y_0(s)\right),\quad &\ell < 0.
     \end{array}
   \right.
\end{align*}

\begin{lemma}\label{est-lem-1} Under Assumption \ref{edep},
$$
\|\hat{D} - \tilde{D} \|_4 = o_P(1).
$$
\end{lemma}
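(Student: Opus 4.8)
The plan is to attribute the entire discrepancy between $\hat{D}_T$ and $\tilde{D}_T$ to the replacement of the unknown $\mu_X$, $\mu_Y$ and $C_{XY}$ by their sample analogues, to show that this replacement costs only a factor of order $O_P(T^{-1/2})$ in each summand, and then to observe that summing the $O(h)$ non-vanishing terms in \eqref{est-1} and \eqref{est-2} still leaves a $o_P(1)$ quantity. As in the proof of Theorem \ref{th-1}, I would take $\mu_X=\mu_Y=0$, so $\bar{X}_j=X_j-\bar{X}$ and $\bar{Y}_j=Y_j-\bar{Y}$.

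First I would record the identities
$$
\bar{X}_j\otimes\bar{Y}_j-X_j\otimes Y_j=-\bar{X}\otimes Y_j-X_j\otimes\bar{Y}+\bar{X}\otimes\bar{Y},\qquad \hat{C}_{XY}(\cdot,\cdot,1)-C_{XY}=\big(\tilde{C}_{XY}(\cdot,\cdot,1)-C_{XY}\big)-\bar{X}\otimes\bar{Y},
$$
where $\tilde{C}_{XY}(\cdot,\cdot,1)=\tfrac1T\sum_i X_i\otimes Y_i$. Writing $A_j:=\bar{X}_j\otimes\bar{Y}_j-\hat{C}_{XY}(\cdot,\cdot,1)$ and $\tilde{A}_j:=X_j\otimes Y_j-C_{XY}$, so that $\hat{\gamma}_\ell=\tfrac1T\sum_j A_j\otimes A_{j+\ell}$ and $\tilde{\gamma}_\ell=\tfrac1T\sum_j\tilde{A}_j\otimes\tilde{A}_{j+\ell}$, the above yields $E_j:=A_j-\tilde{A}_j=-\bar{X}\otimes Y_j-X_j\otimes\bar{Y}+R$, with $R$ not depending on $j$ and $\|R\|_2=O_P(T^{-1/2})$; here I would invoke $\|\bar{X}\|_1,\|\bar{Y}\|_1=O_P(T^{-1/2})$ (Theorem 3.3 of \cite{berkes:horvath:rice:2013}, already used in Lemma \ref{lem-1}) and $\|\tilde{C}_{XY}(\cdot,\cdot,1)-C_{XY}\|_2=O_P(T^{-1/2})$ (from Theorem \ref{th-1} at $x=1$ together with Lemma \ref{lem-1}).

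The key step is then $\sup_{|\ell|<T}\|\hat{\gamma}_\ell-\tilde{\gamma}_\ell\|_4=O_P(T^{-1/2})$. From $A_j\otimes A_{j+\ell}-\tilde{A}_j\otimes\tilde{A}_{j+\ell}=E_j\otimes A_{j+\ell}+\tilde{A}_j\otimes E_{j+\ell}$ and the formula for $E_j$, the difference $\hat{\gamma}_\ell-\tilde{\gamma}_\ell$ splits into finitely many terms, each of which, after the triangle inequality in $L^2[0,1]^4$ and the factorization $\|f\otimes g\|_{d_1+d_2}=\|f\|_{d_1}\|g\|_{d_2}$, is bounded by a product of (i) one of $\|\bar{X}\|_1$, $\|\bar{Y}\|_1$, $\|R\|_2$, each $O_P(T^{-1/2})$ and free of $\ell$, and (ii) an average such as $\tfrac1T\sum_j\|Y_j\|_1\|A_{j+\ell}\|_2$, $\tfrac1T\sum_j\|X_j\|_1\|\tilde{A}_{j+\ell}\|_2$, or $\tfrac1T\sum_j\|A_{j+\ell}\|_2$. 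By Cauchy--Schwarz every such average is bounded, uniformly in $\ell$, by an $\ell$-free quantity built from $\tfrac1T\sum_{j=1}^T\|A_j\|_2^2$, $\tfrac1T\sum_{j=1}^T\|\tilde{A}_j\|_2^2$, $\tfrac1T\sum_{j=1}^T\|X_j\|_1^2$ and $\tfrac1T\sum_{j=1}^T\|Y_j\|_1^2$, all of which are $O_P(1)$ because $p>4$ in Assumption \ref{edep} furnishes $E\|X_0\|_1^4,E\|Y_0\|_1^4<\infty$ and hence, via the ergodic theorem, control of these empirical moments. Since $W_b$ is bounded with bounded support, only $O(h)$ terms of $\hat{D}_T-\tilde{D}_T=\sum_\ell W_b(\ell/h)(\hat{\gamma}_\ell-\tilde{\gamma}_\ell)$ are non-zero, so
$$
\|\hat{D}_T-\tilde{D}_T\|_4\le C\,h\,\sup_{|\ell|<T}\|\hat{\gamma}_\ell-\tilde{\gamma}_\ell\|_4=O_P\big(h/\sqrt{T}\big)=o_P(1)
$$
by the bandwidth condition \eqref{h-cond}.

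The argument is conceptually light; the one point deserving care is that the rate $O_P(T^{-1/2})$ on $\|\hat{\gamma}_\ell-\tilde{\gamma}_\ell\|_4$ hold \emph{uniformly} in $\ell$, since one multiplies by $O(h)$ surviving weights. This uniformity is exactly what the Cauchy--Schwarz step secures, by dominating each $\ell$-dependent average by an $\ell$-free empirical second moment, and the whole scheme costs only fourth moments of $\|X_0\|$ and $\|Y_0\|$, which Assumption \ref{edep} supplies.
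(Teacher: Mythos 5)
Your proposal is correct and follows essentially the same route as the paper's proof: both isolate the centering-induced terms (each carrying a factor $\bar X$, $\bar Y$, or $\hat C_{XY}(\cdot,\cdot,1)-C_{XY}$ of order $O_P(T^{-1/2})$), bound the remaining factors uniformly in $\ell$ via Cauchy--Schwarz and the fourth-moment conditions from Assumption \ref{edep}, and then sum over the $O(h)$ surviving weights to get $O_P(h/\sqrt{T})=o_P(1)$ by \eqref{h-cond}. The only differences are organizational: the paper expands $\hat\gamma_\ell-\tilde\gamma_\ell$ into 24 explicit remainder terms and controls each in expectation (concluding with Markov's inequality), whereas you use the more economical bilinear splitting $E_j\otimes A_{j+\ell}+\tilde A_j\otimes E_{j+\ell}$ and argue directly in probability via the ergodic theorem.
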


\begin{proof}
According to the definition of $\hat{\gamma}_\ell$, when $\ell \ge 0$, we have that

\begin{align}\label{approx-eq-0}
   \hat{\gamma}_\ell&(t,s,u,v)= \frac{1}{T}\sum_{j=1}^{T-\ell}( \bar{X}_j(t) \bar{Y}_j(s)-\hat{C}_{XY}(t,s,1))( \bar{X}_{j+\ell}(u)\bar{Y}_{j+\ell}(v)-\hat{C}_{XY}(u,v,1)) \\
    &= \frac{1}{T}\sum_{j=1}^{T-\ell}\Biggl(  (X_j(t)Y_j(s)-EX_0(t)Y_0(s)) - \bar{X}(t)Y_j(s) + \bar{X}(t)\bar{Y}(s) - X_j(t)\bar{Y}(s) \notag \\
    &\; + (EX_0(t)X_0(s) - \hat{C}(t,s,1)) \Biggl) \times \Biggl(  X_{j+\ell}(u)Y_{j+\ell}(v)-EX_0(u)Y_0(v) - \bar{X}(u)Y_{j+\ell}(v) \notag \\
    &+ \bar{X}(u)\bar{Y}(v) - X_{j+\ell}(u)\bar{Y}(v) + (EX_0(u)X_0(v) - \hat{C}(u,v,1)) \Biggl) \notag \\
    &= \tilde{\gamma}(t,s,u,v) + \sum_{p=1}^{24} R_{p,\ell}(t,s,u,v), \notag
\end{align}
where the terms $R_{p,\ell}$ represent the remaining 24 terms in the definition of $\hat{\gamma}_\ell$ obtained by completing the multiplication that results in 25 terms, the first of which corresponds to $\tilde{\gamma}_\ell$. We now aim to show that for all $p$ and $\ell$, $E\|R_{p,\ell}\|_4=O(T^{-1/2})$. For $R_{1,\ell}$, we have by the triangle inequality, a couple of applications of the Cauchy-Schwarz inequality, and the assumed stationarity that

\begin{align}\label{approx-eq-1}
E\|R_{1,\ell}\|_4 &= \frac{1}{T} E \Biggl\| \sum_{j=1}^{T-\ell}  (X_j(t)Y_j(s)-EX_0(t)Y_0(s))\bar{X}(u)Y_{j+\ell}(v) \Biggl\|_4 \\
&\le \frac{1}{T}  \sum_{j=1}^{T-\ell}  E \|(X_j(t)Y_j(s)-EX_0(t)Y_0(s))\bar{X}(u)Y_{j+\ell}(v) \|_4 \notag \\
&= \frac{1}{T}  \sum_{j=1}^{T-\ell}  E  \|(X_j(t)Y_j(s)-EX_0(t)Y_0(s))\|_2 \|\bar{X}\|_1 \|Y_{j+\ell}\|_1 \notag \\
&\le \frac{1}{T}  \sum_{j=1}^{T-\ell}   (E  \|(X_j(t)Y_j(s)-EX_0(t)Y_0(s))\|_2^2)^{1/2} (E (\|\bar{X}\|_1 \|Y_{j+\ell}\|_1)^2)^{1/2} \notag \\
&\le \frac{1}{T}  \sum_{j=1}^{T-\ell}  (E  \|(X_j(t)Y_j(s)-EX_0(t)Y_0(s))\|_2^2)^{1/2} (E \|\bar{X}\|_1^4)^{1/4} (E \|Y_{j+\ell}\|_1^4)^{1/4} \notag\\
&= \frac{T-\ell}{T}   (E  \|(X_0(t)Y_0(s)-EX_0(t)Y_0(s))\|_2^2)^{1/2} (E \|\bar{X}\|_1^4)^{1/4} (E \|Y_0\|_1^4)^{1/4}. \notag
\end{align}

It follows from Proposition 3.1 of \cite{torgovitski:20162} that under Assumption \ref{edep}, $(E \|\bar{X}\|_1^4)^{1/4}=O(T^{-1/2})$, and also according to Assumption \ref{edep}, $(E \|Y_0\|_1^4)^{1/4}=O(1)$ and $(E  \|(X_0(t)Y_0(s)-EX_0(t)Y_0(s))\|_2^2)^{1/2}=O(1)$. This implies with \eqref{approx-eq-1} that $E\|R_{1,\ell}\|_4=O(T^{-1/2})$. Similar arguments using the fact that $(E \|\bar{Y}\|_1^4)^{1/4}=O(T^{-1/2})$ and $\| EX_0(t)Y_0(s) - \hat{C}_{XY}(t,s,1)\|_2=O(T^{-1/2})$ can be applied to get that $E\|R_{p,\ell}\|=O(T^{-1/2})$, $2\le p \le 24$. This implies via the triangle inequality and \eqref{approx-eq-0} that

$$
E\| \hat{\gamma}_\ell - \tilde{\gamma}_\ell\|_4 = O(T^{-1/2}),
$$
from which it follows that

\begin{align}\label{approx-eq-2}
E\Biggl\| \sum_{\ell=1}^\infty W_b\Biggl(\frac{\ell}{h} \Biggl) (\hat{\gamma}_\ell - \tilde{\gamma}_\ell) \Biggl\|_4 \le \sum_{\ell=1}^\infty W_b\Biggl(\frac{\ell}{h} \Biggl)  E\|(\hat{\gamma}_\ell - \tilde{\gamma}_\ell) \|_4=O(hT^{-1/2})=o(1),
\end{align}
by \eqref{h-cond} and the fact that $W_b$ has bounded support. A parallel argument may be used to show that for $\ell<0$, that $E\| \hat{\gamma}_\ell - \tilde{\gamma}_\ell\|_4 = O(T^{-1/2}),$ from which we obtain that

\begin{align}\label{approx-eq-3}
E\Biggl\| \sum_{\ell=-\infty}^{-1} W_b\Biggl(\frac{\ell}{h} \Biggl) (\hat{\gamma}_\ell - \tilde{\gamma}_\ell) \Biggl\|_4 =O(hT^{-1/2})=o(1).
\end{align}

\eqref{approx-eq-2} and \eqref{approx-eq-3} together with the triangle inequality imply that

$$
E \| \hat{D}- \tilde{D} \|_4 \le E\Biggl\| \sum_{\ell=1}^\infty W_b\Biggl(\frac{\ell}{h} \Biggl) (\hat{\gamma}_\ell - \tilde{\gamma}_\ell) \Biggl\|_4 + E\Biggl\| \sum_{\ell=-\infty}^{-1} W_b\Biggl(\frac{\ell}{h} \Biggl) (\hat{\gamma}_\ell - \tilde{\gamma}_\ell) \Biggl\|_4 = o(1),
$$
and this implies the result with Markov's inequality.
\end{proof}

\begin{lemma}\label{est-lem-2} Under Assumption \ref{edep},
$$
\|\tilde{D} - D \|_4 = o_P(1).
$$
\end{lemma}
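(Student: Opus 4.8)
The plan is to show that the smoothed periodogram estimator $\tilde{D}_T$, built from the ``oracle'' centred products $\xi_i(t,s) = X_i(t)Y_i(s) - EX_0(t)Y_0(s)$, converges to the long-run covariance kernel $D$. This is now a relatively standard consistency statement for a kernel spectral density estimator at frequency zero, the only twist being that the underlying observations live in $L^2[0,1]^2$ and satisfy the $L^p$--$m$-approximability condition inherited from Assumption \ref{edep} via Lemma \ref{lem-2}. First I would decompose the error into a bias term and a variance term in the usual way:
\begin{align*}
\tilde{D}_T - D = \sum_{\ell=-\infty}^\infty W_b\!\left(\frac{\ell}{h}\right)\left(\tilde{\gamma}_\ell - \gamma_\ell\right) + \left( \sum_{\ell=-\infty}^\infty W_b\!\left(\frac{\ell}{h}\right)\gamma_\ell - D \right),
\end{align*}
where $\gamma_\ell(t,s,u,v) = \mathrm{cov}(\xi_0(t,s),\xi_\ell(u,v))$ is the true lag-$\ell$ autocovariance kernel, so that $D = \sum_{\ell} \gamma_\ell$.

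For the bias term I would argue that, since $W_b$ is continuous with $W_b(0)=1$ and has bounded support, $W_b(\ell/h) \to 1$ for each fixed $\ell$ as $h\to\infty$, and $|W_b(\ell/h)|\le \|W_b\|_\infty$; provided $\sum_\ell \|\gamma_\ell\|_4 < \infty$, dominated convergence (over the index set $\mathbb Z$) gives $\|\sum_\ell W_b(\ell/h)\gamma_\ell - D\|_4 \to 0$. The summability $\sum_\ell \|\gamma_\ell\|_4 < \infty$ follows from the $L^q$--$m$-approximability of $\{\xi_i\}$ with $q = p/2 > 2$ established in Lemma \ref{lem-2} (with rate $O(m^{-\gamma})$, $\gamma>1$), by a standard argument bounding $\|\gamma_\ell\|$ in terms of the approximation coefficients, e.g. $\|\gamma_\ell\| \lesssim \sum_{m\ge \lfloor \ell/2\rfloor}(E\|\xi_0-\xi_{0,m}\|_2^q)^{1/q}$, which is summable in $\ell$. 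For the variance/stochastic term I would show $E\|\sum_\ell W_b(\ell/h)(\tilde\gamma_\ell - \gamma_\ell)\|_4 \to 0$; bounding this by $\sum_{|\ell|\le Kh} \|W_b\|_\infty\, E\|\tilde\gamma_\ell - \gamma_\ell\|_4$ (finitely many nonzero terms, $K$ the support radius of $W_b$), it suffices to have $E\|\tilde\gamma_\ell - \gamma_\ell\|_4 = o(1/h)$ uniformly in $\ell$, which in turn follows from a bound of order $O(T^{-1/2})$ on each such term together with the bandwidth condition $h/T^{1/2}\to 0$ from \eqref{h-cond}. The $O(T^{-1/2})$ (indeed $O(T^{-1/2})$ after accounting for the $\ell$-dependent truncation of the sum to $T-|\ell|$ terms versus the normalization $1/T$, plus the $O(T^{-1})$ bias from centring at $EX_0Y_0$ rather than the sample mean, which here is not an issue since $\tilde\gamma_\ell$ already uses the true mean) is obtained by a fourth-moment computation on the sum $\frac1T\sum_j (\xi_j\otimes\xi_{j+\ell} - \gamma_\ell)$, using $m$-approximability to control the covariances of the summands; alternatively one can invoke an off-the-shelf $L^2$ consistency result for Hilbert-space-valued spectral density estimators under $L^p$--$m$-approximability, such as those in \cite{hormann:kokoszka:2010} or \cite{panaretos:tavakoli:2012}, after verifying that $\{\xi_i\}$ meets their hypotheses.

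The main obstacle is the moment control in the stochastic term: establishing $E\|\tilde\gamma_\ell - \gamma_\ell\|_4 = O(T^{-1/2})$ uniformly over the relevant range of $\ell$ requires a genuine fourth-order (in fact second moment of an $L^2$-norm, hence fourth moment of the $\xi$'s, which is where $p>4$ in Assumption \ref{edep} is used) computation, expanding $E\|\frac1T\sum_j (\xi_j\otimes\xi_{j+\ell}-\gamma_\ell)\|_2^2$ into a double sum of cross-covariances $\mathrm{cov}(\xi_j\otimes\xi_{j+\ell}, \xi_{j'}\otimes\xi_{j'+\ell})$ and showing these are summable in $|j-j'|$ via the approximation coefficients. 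This is routine but tedious; I would either carry it out directly, paralleling the computation that must underlie Lemma \ref{est-lem-1}, or cite the corresponding result from the literature on long-run covariance estimation for $L^p$--$m$-approximable sequences and note that $\{\xi_i\}$ satisfies the required conditions by Lemma \ref{lem-2}. Combining the bias and stochastic bounds via the triangle inequality and Markov's inequality yields $\|\tilde D_T - D\|_4 = o_P(1)$, completing the proof; chaining this with Lemma \ref{est-lem-1} then gives Theorem \ref{th-est}.
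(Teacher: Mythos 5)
Your overall strategy --- reduce to consistency of a kernel long-run-covariance estimator for the Hilbert-space-valued sequence $\xi_i = X_iY_i - EX_0Y_0$, using the approximability of $\xi_i$ from Lemma \ref{lem-2} --- is exactly the right reduction, and your fallback option of invoking an off-the-shelf theorem is what the paper actually does: its entire proof consists of checking, via Lemma \ref{lem-2} and Lyapounov's inequality, that $m\,(E\|\xi_i-\xi_{i,m}\|_2^2)^{1/2}\to 0$ and $\sum_m (E\|\xi_i-\xi_{i,m}\|_2^2)^{1/2}<\infty$, and then citing Theorem 2 of \cite{horvath:kokoszka:reeder:2012}. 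So one of your two routes coincides with the paper's (modulo which reference you cite), and your bias-term argument (dominated convergence over $\mathbb{Z}$ plus summability of $\|\gamma_\ell\|_4$ deduced from the approximation coefficients) is sound and needs only second moments of $\xi_i$.

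The primary route you sketch, however, has a concrete moment-counting problem. You propose to establish $E\|\tilde\gamma_\ell-\gamma_\ell\|_4 = O(T^{-1/2})$ by expanding $E\bigl\|\tfrac1T\sum_j(\xi_j\otimes\xi_{j+\ell}-\gamma_\ell)\bigr\|_2^2$ into a double sum of covariances, and you assert that the required fourth moment of the $\xi$'s "is where $p>4$ in Assumption \ref{edep} is used." That is off by a factor of two: since $\|\xi_i\|_2\le\|X_i\|_1\|Y_i\|_1 + \mathrm{const}$, Assumption \ref{edep} with $p>4$ only yields $E\|\xi_0\|_2^{q}<\infty$ for $q=p/2>2$ (this is precisely what Lemma \ref{lem-2} delivers), whereas $E\|\xi_j\otimes\xi_{j+\ell}\|_4^2 = E[\|\xi_j\|_2^2\|\xi_{j+\ell}\|_2^2]$ requires $E\|\xi_0\|_2^4<\infty$, i.e.\ essentially $p\ge 8$ moments of $X$ and $Y$. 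So the variance computation as written is not licensed by the stated assumptions; you would need either to strengthen the moment condition, or to insert a truncation argument so that consistency in probability is obtained from $2+\delta$ moments of $\xi$ --- which is in effect what the cited theorem of Horv\'ath, Kokoszka and Reeder accomplishes, under exactly the $L^2$-type conditions on $\xi_i-\xi_{i,m}$ that the paper verifies. If you route the argument through that (or a comparable) result, as you suggest in the alternative, the proof closes; if you insist on the direct second-moment expansion, it does not, under Assumption \ref{edep} as stated.
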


\begin{proof}
$\tilde{\gamma}_\ell$ is the autocovariance estimator based on a sample of size $T$ at lag $\ell$ of the mean zero random functions $\{\xi_i(t,s), \; i \in {\mathbb Z}\}$, where, recalling from above, $\xi_i(t,s) = X_i(t)Y_i(s) - EX_0(t)Y_0(s)$. With $\xi_{i,m}(t,s) = X_{i,m}(t)Y_{i,m}(s) - EX_0(t)Y_0(s)$, we have from \ref{lem-2} and Lyapounov's inequality that

$$
\lim_{m\to \infty} m (E\| \xi_i - \xi_{i,m}\|_2^2)^{1/2} = 0,
$$
and
$$
\sum_{m=1}^{\infty} (E\| \xi_i - \xi_{i,m}\|_2^2)^{1/2}<\infty.
$$
This shows that $\xi_i$ satisfy the conditions of Theorem 2 from \cite{horvath:kokoszka:reeder:2012}, which implies the result.

\end{proof}

\begin{proof}[Proof of Theorem \ref{th-est}]
The Theorem follows directly from Lemmas \ref{est-lem-1} and \ref{est-lem-2}.
\end{proof}

\subsection{Consistency of $Z_T$ and $Z_{T,p}$}\label{alt-sec}
In order for the tests based on $Z_T$ and $Z_{T,p}$ to be consistent, we only require that the series before and after the change is ergodic, which we quantify by the following assumption.

\begin{assumption}\label{alt-as-1} Under $H_{A,2}$, we assume that

$$
\Biggl\| \frac{1}{k^*} \sum_{i=1}^{k^*} (X_i(t) - \mu_X(t))(Y_i(s)- \mu_Y(s) ) - C_1(t,s) \Biggl\|_2 = o_P(1),
$$
and
$$
 \Biggl\| \frac{1}{T-k^*} \sum_{i=k^*+1}^{T} (X_i(t) - \mu_X(t))(Y_i(s)- \mu_Y(s) ) - C_2(t,s) \Biggl\|_2=o_P(1).
$$
\end{assumption}

\begin{theorem}\label{alt-cp-1} Under $H_{A,2}$ and Assumption \ref{alt-as-1} holds, then
$$
Z_T \stackrel{P}{\longrightarrow} \infty.
$$
If in addition $C_1$ and $C_2$ in $H_{A,2}$ satisfy $\langle C_1 - C_2 , \varphi_i\rangle_2 \ne 0$ for some $1\le i \le p$, then
$$
Z_{T,p} \stackrel{P}{\longrightarrow} \infty.
$$
\end{theorem}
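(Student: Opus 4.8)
The plan follows the standard route for CUSUM-type change point statistics: bound $Z_T$ and $Z_{T,p}$ from below by freezing the maximization variable at the true break fraction $x=k^*/T=\lfloor T\theta\rfloor/T$ and showing that the resulting partial-sum object stabilizes in probability to a fixed nonzero element of $L^2[0,1]^2$, so that the multiplicative factor $T$ forces divergence. Since for large $T$ one has $k^*/T\in(0,1)$ and $\lfloor T(k^*/T)\rfloor=k^*$, the quantity inside the supremum at $x=k^*/T$ is
\[
\Delta_T:=\hat{C}_{XY}(\cdot,\cdot,k^*/T)-\tfrac{k^*}{T}\,\hat{C}_{XY}(\cdot,\cdot,1)=\tfrac{1}{T}\sum_{i=1}^{k^*}(X_i-\bar X)\otimes(Y_i-\bar Y)-\tfrac{k^*}{T}\,\hat{C}_{XY}(\cdot,\cdot,1).
\]

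The first step is to identify the probability limit of $\Delta_T$. Expanding $(X_i-\bar X)\otimes(Y_i-\bar Y)$ about $(X_i-\mu_X)\otimes(Y_i-\mu_Y)$, every remainder term carries a factor $\|\bar X-\mu_X\|_2$ or $\|\bar Y-\mu_Y\|_2$ — both $o_P(1)$ by ergodicity of the marginal series — multiplied by partial averages of $X_i-\mu_X$ or $Y_i-\mu_Y$ that are $O_P(1)$ in $\|\cdot\|_2$; hence those remainders are $o_P(1)$ in $L^2[0,1]^2$, uniformly over the two sums (alternatively one may first pass to the uncentred products along the lines of Lemma \ref{lem-1}). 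Therefore $\tfrac{1}{T}\sum_{i=1}^{k^*}(X_i-\bar X)\otimes(Y_i-\bar Y)=\tfrac{k^*}{T}\big(\tfrac{1}{k^*}\sum_{i=1}^{k^*}(X_i-\mu_X)\otimes(Y_i-\mu_Y)\big)+o_P(1)\stackrel{P}{\longrightarrow}\theta C_1$ by Assumption \ref{alt-as-1}, and likewise $\hat{C}_{XY}(\cdot,\cdot,1)\stackrel{P}{\longrightarrow}\theta C_1+(1-\theta)C_2$; subtracting gives
\[
\Delta_T\ \stackrel{P}{\longrightarrow}\ \theta(1-\theta)(C_1-C_2)\qquad\mbox{in }L^2[0,1]^2 .
\]

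The two conclusions follow quickly. For $Z_T$: since $C_1\ne C_2$ in $L^2[0,1]^2$, $\delta:=\|\theta(1-\theta)(C_1-C_2)\|_2>0$, and $Z_T\ge T\|\Delta_T\|_2^2=T(\delta^2+o_P(1))\stackrel{P}{\longrightarrow}\infty$. For $Z_{T,p}$: choose $i_0\le p$ with $\langle C_1-C_2,\varphi_{i_0}\rangle_2\ne0$ (which exists by hypothesis), freeze the supremum at $x=k^*/T$, and keep only the $i_0$-th summand (nonnegative, since $\hat\lambda_{i_0}>0$ with probability tending to one), giving $Z_{T,p}\ge (T/\hat\lambda_{i_0})\langle\Delta_T,\hat\varphi_{i_0}\rangle_2^2$. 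Using the consistency of the eigenfunction estimates (cf.\ \eqref{spec-est}), $\langle\Delta_T,\hat\varphi_{i_0}\rangle_2^2\stackrel{P}{\longrightarrow}\theta^2(1-\theta)^2\langle C_1-C_2,\varphi_{i_0}\rangle_2^2>0$, while $\hat\lambda_{i_0}\le\|\hat D_T\|_4=O_P(h)$ with $h=o(T^{1/2})$ by \eqref{h-cond}, so $T/\hat\lambda_{i_0}\stackrel{P}{\longrightarrow}\infty$; hence $Z_{T,p}\stackrel{P}{\longrightarrow}\infty$.

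The main obstacle is the behaviour of the estimated spectrum $(\hat\lambda_i,\hat\varphi_i)$ under $H_{A,2}$: there $\hat D_T$ no longer estimates the stationary operator $d$, because the residuals $\bar X_j\bar Y_j-\hat{C}_{XY}(\cdot,\cdot,1)$ retain a within-regime mean shift proportional to $C_1-C_2$, which injects into $\hat D_T$ a rank-one component of order $h$ along $C_1-C_2$. One must therefore establish (i) that $\hat\lambda_{i_0}$ does not grow as fast as $T$ — the bound $\|\hat D_T\|_4=O_P(h)=o_P(T^{1/2})$, obtained as in the proof of Theorem \ref{th-est} by counting the $O(h)$ nonzero weights $W_b(\ell/h)$ and bounding each $\|\hat\gamma_\ell\|_4$ by a finite moment, is enough — and (ii) that $\hat\varphi_{i_0}$ does not become asymptotically orthogonal to $C_1-C_2$, so that $\langle\Delta_T,\hat\varphi_{i_0}\rangle_2$ stays bounded away from zero. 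Everything else, in particular the in-probability identification of $\Delta_T$, is routine once the sample-mean centring has been disposed of via Assumption \ref{alt-as-1}.
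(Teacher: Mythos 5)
The paper omits its proof of this theorem as ``straightforward,'' and your argument is exactly the standard route the authors intend: freeze the supremum at the break fraction $x=k^*/T$, use Assumption \ref{alt-as-1} (plus the usual $o_P(1)$ disposal of the sample-mean centring) to identify $\Delta_T\stackrel{P}{\to}\theta(1-\theta)(C_1-C_2)$, and let the factor $T$ do the rest. The $Z_T$ half is complete and correct.

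For $Z_{T,p}$ you correctly identify, and go beyond the paper in articulating, the one genuinely delicate point: under $H_{A,2}$ the estimator $\hat D_T$ is contaminated by a within-regime mean shift, so the corollary guaranteeing \eqref{spec-est} (which is proved under the stationarity of Assumption \ref{edep}) no longer applies. Your item (i) is handled cleanly — $\hat\lambda_{i_0}\le\|\hat D_T\|_4=O_P(h)=o_P(T^{1/2})$ is a correct and sufficient bound. But item (ii) is left open: in the body of the argument you invoke \eqref{spec-est} to conclude $\langle\Delta_T,\hat\varphi_{i_0}\rangle_2^2\to\theta^2(1-\theta)^2\langle C_1-C_2,\varphi_{i_0}\rangle_2^2$, and then in the closing paragraph you concede that \eqref{spec-est} is precisely what is in doubt under the alternative, without resolving the tension. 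Either you should adopt the paper's (implicit) convention that \eqref{spec-est} is taken as a hypothesis of the theorem — note the theorem's side condition is phrased in terms of the population $\varphi_i$, which only makes sense if $\hat\varphi_i$ tracks $\varphi_i$ — in which case the elaborate bound in (i) is unnecessary and $\hat\lambda_{i_0}\to\lambda_{i_0}>0$ directly; or you must actually show that the limiting alignment of $\hat\varphi_{i_0}$ with $C_1-C_2$ is nonzero (e.g.\ by noting that the contaminating component of $\hat D_T$ is a rank-one term along $(C_1-C_2)\otimes(C_1-C_2)$, so the perturbed leading eigenfunctions cannot all become orthogonal to $C_1-C_2$). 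As written, the $Z_{T,p}$ conclusion rests on an assumption you have yourself argued is not automatic; fixing this requires only a sentence once you commit to one of the two options, but it does need to be committed to.
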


Assumption \ref{alt-as-1} implies that the covariance estimators of the data before and after the change are consistent with their underlying population quantities. This would hold if, for example, the function $g_{XY}$ which defines how the series $(X_i,Y_i)$ depends on the underlying innovation sequence were to change from $g_{XY}^{(1)}$ to $g_{XY}^{(2)}$ in such a way that $H_{A,2}$ held, since then the series before and after the change would still be ergodic. The proof of Theorem \ref{alt-cp-1} is straightforward, so the details are omitted.

\end{document}